\documentclass[11pt, a4paper]{article}
\usepackage{amsmath,amssymb,amsthm}
\usepackage{mathtools}
\usepackage{xcolor}
\usepackage{caption}
\usepackage{subcaption}
\usepackage{tikz}
\usetikzlibrary{matrix,shapes,decorations.pathreplacing}
\usepackage{comment}
\usepackage{dsfont}
\usepackage{nicematrix}
\usepackage{enumitem}
\usepackage{kotex}
\usepackage{mathrsfs}
\usepackage{tabularx}
\usepackage{hyperref}
\usepackage[affil-it]{authblk} % 저자 및 소속 정렬용
\hypersetup{colorlinks,breaklinks,
             linkcolor=black,urlcolor=blue,
             anchorcolor=blue,citecolor=blue}
\usepackage[nameinlink, capitalise, noabbrev]{cleveref}
\usepackage[table]{xcolor}
\usepackage{overpic}
\usepackage{geometry}
\newtheorem{theorem}{Theorem}[section]

\newtheorem{lemma}[theorem]{Lemma}
\newtheorem{proposition}[theorem]{Proposition}

\newtheorem{remark}[theorem]{Remark}

\DeclareMathOperator*{\argmax}{arg\,max}

\title{\Large \textbf{Stability of Phase-Locked States in Signed Kuramoto Networks: Structure versus Adaptation}}

\author[1]{Jaeyoung Yoon} 
\author[1,2]{Christian Kuehn} 

\affil[1]{Department of Mathematics, School of Computation, Information and Technology, Technical University of Munich, Garching bei M\"unchen 85748, Germany} 
\affil[2]{Complexity Science Hub Vienna, Vienna, Austria}
\date{} % 또는 특정 날짜 입력

\begin{document}

\maketitle

% \keywords{Adaptive Kuramoto model, Phase-locked states, Signed network, Spectral stability, Network adaptation}

\begin{abstract}
Adaptive Kuramoto models admit a variety of nontrivial phase-locked configurations, including antipodal and rotating-wave states. A central open question is whether the observed persistence of such configurations can be attributed to intrinsic properties of the associated signed interaction networks, or whether it relies essentially on adaptive coupling dynamics. To address this question, we study the stability of antipodal and rotating-wave phase configurations on fixed signed networks that preserve the same phase symmetries but are not generated by adaptive dynamics. We show that for two canonical classes of static signed networks, stability is highly constrained, with unstable modes persisting under parameter variations generically, and we characterize how adaptive coupling influences invariant sets and basins of attraction for the configurations where stability is permitted. Taken together, these results show that while static network structure imposes severe constraints on the stability of phase-locked configurations, adaptive coupling dynamics organize and delineate their robustness when stability is permitted.
\end{abstract}

\vspace{0.5cm}
\noindent \textbf{Keywords:} Adaptive Kuramoto model, Phase-locked states, Signed network, Spectral stability, Network adaptation

\section{Introduction}
Classical models of collective dynamics typically assume fixed interaction structures. However, in many real systems the interaction network coevolves with the system’s state, leading to adaptive network dynamics in which structure and dynamics are inseparably coupled \cite{GB2008,Sawicki2023}. Such adaptive network dynamical systems are characterized by a two-way coupling \cite{BGKKY2023}: the node dynamics depend on the network, while the network adapts according to the node states \cite{KYSN2017}. This coevolution can profoundly alter collective behavior and has been shown to generate forms of organization that are not supported by static networks alone.

Networks of phase oscillators, and in particular Kuramoto-type models, provide a canonical setting for studying collective dynamics on adaptive networks. Beyond complete synchronization, these systems admit a variety of nontrivial phase-locked states, such as clustered, antipodal, and rotating-wave configurations \cite{AS2004,Jaros2018,Haugland2021}. Recent work has demonstrated that adaptive coupling rules can robustly generate such states, often accompanied by the emergence of structured, signed interaction networks \cite{SYT2002,RZ2007}.

Despite these advances, an important conceptual question remains unresolved. When a phase-locked state emerges in an adaptive system, it is typically accompanied by the formation of a structured interaction network, often with signed couplings reflecting excitation and inhibition \cite{SAB2019,Gallier2016,HS2011,Altafini2013,Cartwright1956}. This raises a natural question: to what extent is the stability of the observed phase-locked state a property of the induced network structure itself, and to what extent does it rely on the ongoing adaptive dynamics? In most existing analyses, these two aspects are inseparable, as stability is examined within the full adaptive system where phases and couplings coevolve \cite{BSY2019,HNP2016}.

In this work, we address this question by separating the roles of network structure and adaptive dynamics in stability. We do so by fixing the interaction network \cite{DJD2019,CMM2018} and analyzing the stability of phase-locked states on the resulting static signed network. This allows us to identify structural constraints that are difficult to disentangle in fully adaptive systems. Our results are consistent with existing stability analyses of adaptive models \cite{BSY2019} and provide a complementary perspective on the role played by static interaction structure.

We focus on two canonical classes of signed networks that arise naturally as asymptotic interaction patterns in adaptive Kuramoto dynamics. The first class consists of block-structured signed networks, which capture clustered patterns of excitation and inhibition associated with synchronized and antipodal phase configurations. The second class comprises locally excitatory–globally inhibitory (LEGI) networks, characterized by short-range positive interactions and long-range negative couplings, a structure closely linked to rotating-wave solutions \cite{WSG2006}. These networks preserve the symmetries of the corresponding phase configurations and therefore provide representative settings for probing structural stability.

Our analysis reveals that static signed networks impose severe constraints on the stability of nontrivial phase-locked states. In block-structured networks, antipodal configurations are generically unstable, with stability occurring only in highly restricted and exceptional cases. Similarly, in LEGI networks, rotating-wave states are supported only within narrow parameter regimes. In both cases, instabilities persist even when the static network fully respects the symmetry of the phase configuration, demonstrating that symmetry alone is not sufficient for stability.

We then return to the adaptive Kuramoto model to clarify the dynamical role of coupling adaptation, whose time scale is controlled by a parameter $\varepsilon$. Rather than directly stabilizing all unstable modes of the corresponding static system, adaptive dynamics organize invariant sets and basins of attraction in phase space \cite{Menck2013}. This mechanism explains how phase-locked states that are structurally fragile on static networks can nevertheless appear robust in adaptive systems. We derive explicit sufficient conditions under which complete synchronization or antipodal synchronization emerges and quantify how the adaptation rate controls the relaxation of initial conditions and the size of admissible basins of attraction.

Taken together, our results show that the robustness of nontrivial phase-locked states in adaptive oscillator networks cannot, in general, be inferred from static interaction structure alone. Instead, robustness emerges from the ongoing coevolution between dynamics and coupling architecture. More broadly, this work highlights the importance of disentangling structural and dynamical effects when analyzing stability in adaptive complex systems and provides a framework for systematically assessing how adaptive mechanisms shape collective behavior.

\vspace{.2cm}

The remainder of the paper is organized as follows. In \Cref{Sec_Pre}, we review relevant results on phase-locked solutions of the adaptive Kuramoto model and introduce canonical classes of signed networks motivated by these solutions. \Cref{Sec_stability} is devoted to the stability analysis of phase-locked equilibria on fixed signed networks, where we identify intrinsic structural constraints in block-structured and locally excitatory–globally inhibitory networks. In \Cref{sec_special}, we return to the adaptive Kuramoto model and derive explicit sufficient conditions for complete and antipodal synchronization, quantifying how coupling adaptation organizes invariant sets and basins of attraction. Finally, \Cref{sec_conclusion} summarizes the main findings and discusses their implications for stability and robustness in adaptive complex networks.
%%%%%%%%%%%%%%%%%%%%%%%%%%%%%%%%%%%%%%%%%%%%%%%%%%%%%%%%%%%%%%%%%%%%%%%%%%%%%%%%%%%%%%%%%%%%%%%

\section{Preliminaries}\label{Sec_Pre}
In this section, we outline the framework underlying our analysis. \Cref{subsec_review} reviews relevant results on phase-locked solutions of the adaptive Kuramoto model from \cite{BSY2019}. Rather than providing a complete survey of all possible cluster states, we restrict attention to antipodal and rotating-wave configurations and to the corresponding interaction patterns discussed in the asymptotic analysis of \cite{BSY2019}. In \Cref{subsec_network}, we select representative static signed networks motivated by these configurations, which will be used in the subsequent sections to examine stability properties and the effects of adaptive coupling dynamics.

\subsection{Previous results}\label{subsec_review}
We briefly recall relevant results from \cite{BSY2019} concerning phase-locked solutions of the adaptive Kuramoto model,
\begin{align}\label{adapkm}
    \begin{aligned}
        \begin{dcases}
            \frac{d\theta_i}{dt}&=\omega-\frac{1}{N}\sum_{j=1}^N\kappa_{ij}\sin(\theta_i-\theta_j+\alpha),\\
            \frac{d\kappa_{ij}}{dt}&=-\varepsilon(\sin(\theta_i-\theta_j+\beta)+\kappa_{ij}),\quad\forall~i,j\in[N],
        \end{dcases}
    \end{aligned}
\end{align}
as a reference point for the stability analysis carried out in this paper. In \cite{BSY2019}, the \emph{$n$-th order parameters} $(Z_n,R_n,\Psi_n)$ associated with a phase configuration
$\Theta=(\theta_1,\ldots,\theta_N)\in\mathbb T^N$ are defined by
\begin{align*}
    Z_n(\Theta)=R_n(\Theta)e^{i\Psi_n(\Theta)}:=\frac{1}{N}\sum_{j=1}^Ne^{in\theta_j},\quad\forall~n\in\mathbb N,
\end{align*}
where $R_n\ge 0$ and $\Psi_n\in\mathbb T^1$.
Based on the second-order parameter $R_2(\Theta)$, phase-locked states can be classified into three qualitatively
distinct types:
\begin{itemize}[itemsep=0.2em]
\item splay-type clusters, characterized by $R_2(\Theta)=0$,
\item antipodal clusters, characterized by $R_2(\Theta)=1$, i.e., $\theta_i\in\{0,\pi\}$ for all $i\in[N]$,
\item double antipodal-type clusters, where $\theta_i\in\{0,\pi,\psi,\psi+\pi\}$ for all $i\in[N]$ with some $\psi\in(0,\pi)$.
\end{itemize}

The main classification result of \cite{BSY2019} can be summarized as follows.

\begin{proposition}[Phase-locked solutions]\label{BSY-1}
The system \eqref{adapkm} admits phase-locked solutions of the form
\[\theta_i(t)=\Omega t+\phi_i,\quad\kappa_{ij}=-\sin(\phi_i-\phi_j+\beta),\quad\forall~i,j\in[N],\]
where the phase offsets $\phi_i$ are constant in time, if and only if the configuration
$\{\phi_i\}_{i\in[N]}$ belongs to one of the three classes listed above.
More precisely,
\begin{enumerate}[label=(\roman*),itemsep=0.2em]
\item $\{\phi_i\}$ forms a splay-type cluster,
\item $\{\phi_i\}$ forms an antipodal cluster,
\item $\{\phi_i\}$ forms a double antipodal-type cluster, where
      $\phi_i\in\{0,\pi,\psi_m,\psi_m+\pi\}$ for some $m\in\{1,\ldots,N-1\}$,
      and $\psi_m$ is the unique solution (modulo $2\pi$) of
      \[
      \frac{N-m}{m}\sin(\psi-\alpha-\beta)=\sin(\psi+\alpha+\beta),
      \]
      with exactly $m$ indices satisfying $\phi_i\in\{0,\pi\}$.
\end{enumerate}
\end{proposition}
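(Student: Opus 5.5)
Substituting the ansatz $\theta_i=\Omega t+\phi_i$, $\kappa_{ij}=-\sin(\phi_i-\phi_j+\beta)$ into \eqref{adapkm} reduces the classification to a finite-dimensional algebraic problem. The coupling equation is then satisfied identically, since phase differences are constant and $\kappa_{ij}$ sits at its equilibrium value. The phase equation becomes
\[
\Omega-\omega=\frac{1}{N}\sum_{j=1}^N\sin(\phi_i-\phi_j+\beta)\sin(\phi_i-\phi_j+\alpha),\qquad\forall~i\in[N].
\]
Using $\sin a\sin b=\tfrac12[\cos(a-b)-\cos(a+b)]$, the right-hand side equals
\[
\tfrac12\cos(\alpha-\beta)-\tfrac12\,\mathrm{Re}\bigl(e^{i(2\phi_i+\alpha+\beta)}\overline{Z_2}\bigr)
=\tfrac12\cos(\alpha-\beta)-\tfrac12 R_2\cos(2\phi_i-2\Psi_2+\alpha+\beta).
\]
So the existence condition is that $R_2\cos(2\phi_i-2\Psi_2+\alpha+\beta)$ is independent of $i$. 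Throughout I use rotational invariance to normalize the phases.

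I would then split into cases according to $R_2$. If $R_2=0$, the condition holds trivially, which gives exactly the splay-type clusters. If $R_2>0$, each $2\phi_i-2\Psi_2$ must lie in the set of angles with a common cosine value, i.e. $2\phi_i-2\Psi_2+\alpha+\beta\in\{\gamma,-\gamma\}$ for some $\gamma$. Equivalently, each $\phi_i$ takes one of at most four values, $\{a,a+\pi,b,b+\pi\}$ with $a+b=2\Psi_2-\alpha-\beta$ modulo $\pi$. Rotating so that $a=0$ and setting $\psi=b$ gives the double antipodal form; if only one pair occurs, i.e. $\psi\equiv0$ mod $\pi$, we are in the antipodal case $R_2=1$.

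The remaining work is the self-consistency condition. Let $m$ be the number of indices with $\phi_i\in\{0,\pi\}$. Then
\[
Z_2=\frac{m+(N-m)e^{2i\psi}}{N},
\]
and we need $2\Psi_2=\psi+\alpha+\beta$ modulo $\pi$ (taking $a=0$). Writing this as $\mathrm{Im}\bigl(Z_2e^{-i(\psi+\alpha+\beta)}\bigr)=0$ gives
\[
-m\sin(\psi+\alpha+\beta)+(N-m)\sin(\psi-\alpha-\beta)=0,
\]
which is exactly the stated equation. For the antipodal case, $\psi=0$ gives $Z_2=1$, so $\Psi_2=0$. Here the cosine condition holds automatically, because all $2\phi_i\equiv0$. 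The antipodal case must therefore be handled separately, which reflects the fact that the "one-value" degenerate case imposes no constraint.

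Conversely, each class satisfies the reduced condition, and $\Omega$ is then determined. Concretely,
\[
\Omega=\omega+\tfrac12\cos(\alpha-\beta)-\tfrac12 R_2\cos(\gamma),
\]
which closes the "if" direction.

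The main obstacle I expect is the uniqueness claim for $\psi_m$ modulo $2\pi$. Expanding the equation gives
\[
\tan\psi\,(N-2m)\cos(\alpha+\beta)=N\sin(\alpha+\beta).
\]
This determines $\psi$ only modulo $\pi$, and it degenerates when $N=2m$ or $\cos(\alpha+\beta)=0$. So I would restrict $\psi\in(0,\pi)$ and note that $\psi$ and $\psi+\pi$ give the same configuration set $\{0,\pi,\psi,\psi+\pi\}$. In this sense, "unique modulo $2\pi$" should be read as unique up to this relabeling. I would treat the degenerate parameter cases separately, or exclude them, with a remark.
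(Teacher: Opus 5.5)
The paper gives no proof of this proposition; it is recalled from \cite{BSY2019}, so there is no in-paper argument to compare yours with. Your main line is correct. Substituting the ansatz makes the coupling equation hold identically. Phase locking then becomes the $i$-independence of $R_2\cos(2\phi_i-\arg Z_2+\alpha+\beta)$. The case split into $R_2=0$, a single antipodal pair, and two pairs subject to $\mathrm{Im}\bigl(Z_2e^{-i(\psi+\alpha+\beta)}\bigr)=0$ is an equivalence, so it yields both directions and the stated $\psi$-equation. You are also right that this equation is $\pi$-periodic in $\psi$, so ``unique modulo $2\pi$'' can only mean unique in $(0,\pi)$.

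Two corrections are needed.

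\emph{The factor of 2.} With the paper's convention $Z_2=R_2e^{i\Psi_2}$, the phase in your cosine is $\Psi_2$, not $2\Psi_2$; you are implicitly using $\arg Z_2=2\Psi_2$. The factor cancels in your final condition, so nothing downstream changes.

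\emph{The degenerate cases.} Your list is wrong, because the degeneracies you found come from dividing by $\cos\psi$. Keep the equation in the form
\[
(N-2m)\cos(\alpha+\beta)\,\sin\psi=N\sin(\alpha+\beta)\,\cos\psi .
\]
This has exactly one solution modulo $\pi$ unless both coefficients vanish.
\begin{itemize}
\item $N=2m$ or $\cos(\alpha+\beta)=0$ is harmless: it gives $\psi_m=\pi/2$. For $N=2m$ that configuration even has $R_2=0$.
\item $\sin(\alpha+\beta)=0$ is the case that actually breaks the statement, and you do not list it.
\begin{itemize}
\item If $N\neq 2m$, the only solution is $\psi\equiv 0\pmod\pi$, so no double antipodal cluster with that $m$ exists.
\item If $N=2m$, every $\psi$ solves the equation, so the solutions form a continuum and uniqueness fails.
\end{itemize}
\end{itemize}
This is the case to exclude, not the ones you named.
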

\Cref{BSY-1} characterizes phase-locked solutions of the full adaptive system \eqref{adapkm} and identifies the coupling structure realized along such solutions. In particular, it does not address whether the same phase configurations can be stable when the coupling network is fixed. The purpose of the present work is to address this question by isolating structural instabilities of static signed networks. In the next subsection, we fix representative signed network structures suggested by these phase configurations and analyze their spectral stability.
%%%%%%%%%%%%%%%%%%%%%%%%%%%%%%%%%%%%%%%%%%%%%%%%%%%%%%%%%%%%%%%%%%%%%%%%%%%%%%%%%%%%%%%%%%%%%%%
\subsection{Canonical network structures induced by phase-locked states}\label{subsec_network}
We now examine the static interaction networks associated with the phase-locked solutions described in \Cref{BSY-1}. Rather than providing an exhaustive account of all possible configurations, we focus on identifying canonical network structures that will serve as representative cases for the stability analysis in \Cref{Sec_stability}.

\paragraph{Block-structured networks.}
If the phase configuration $\{\phi_i\}$ consists of one or two antipodal phase classes, the corresponding coupling matrix $\kappa_{ij}=-\sin(\phi_i-\phi_j+\beta)$ has a block-constant structure. In particular, complete synchronization yields a uniformly weighted complete graph, whereas antipodal configurations induce a signed two-block structure (See \Cref{fig_mat}).
\begin{figure}[h]
    \centering
    \begin{subfigure}[b]{0.45\textwidth}
        \centering
        \begin{tikzpicture}
            \node at (-0.6,0.8) {$K=$};
            
            \draw[thick] (0,2) -- (0,-0.5);
            \draw[thick] (0,2) -- (0.2,2);
            \draw[thick] (0,-0.5) -- (0.2,-0.5);
        
            \draw[thick] (2.7,2) -- (2.7,-0.5);
            \draw[thick] (2.7,2) -- (2.5,2);
            \draw[thick] (2.7,-0.5) -- (2.5,-0.5);
            
            \fill[pink!40] (0.1,1.9) rectangle (2.6,-0.4);
            \node at (1.35,0.8) {$-\sin\beta$};
        \end{tikzpicture}
        \caption{complete synchronization}
        \label{subfig_comp}
    \end{subfigure}
    \begin{subfigure}[b]{0.45\textwidth}
        \centering
        \begin{tikzpicture}
            \node at (-0.6,0.8) {$K=$};
            
            \draw[thick] (0,2) -- (0,-0.5);
            \draw[thick] (0,2) -- (0.2,2);
            \draw[thick] (0,-0.5) -- (0.2,-0.5);
        
            \draw[thick] (2.7,2) -- (2.7,-0.5);
            \draw[thick] (2.7,2) -- (2.5,2);
            \draw[thick] (2.7,-0.5) -- (2.5,-0.5);

            \draw (0.1,0.6) -- (2.6,0.6);
            \draw (1.5,-0.4) -- (1.5,2);

            \fill[pink!40] (0.1,0.7) rectangle (1.4,1.9);
            \fill[pink!40] (1.6,-0.4) rectangle (2.6,0.5);
            \draw[pink, thick] (0.1,-0.4) rectangle (1.4,0.5);
            \draw[pink, thick] (1.6,0.7) rectangle (2.6,1.9);

            \node at (2.1,1.25) {$\sin\beta$};
            \node at (0.75,1.25) {$-\sin\beta$};
            \node at (0.75,0) {$\sin\beta$};
            \node at (2.1,0) {$-\sin\beta$};
        \end{tikzpicture}
        \caption{antipodal-states}
        \label{subfig_anti}
    \end{subfigure}
    \caption{Graph structures $K=(\kappa_{ij})$ when equilibrium are in the form of (a) complete synchronization and (b) antipodal states.}
    \label{fig_mat}
\end{figure}
These networks constitute natural examples of block-structured signed graphs and will be analyzed in \Cref{subsec_block}.

\paragraph{Locally excitatory--globally inhibitory networks.}
Splay-type and rotating-wave phase configurations generate interaction networks characterized by local excitation and long-range inhibition.
A prototypical example is the circular band network, in which each node excites a fixed number of nearest neighbors and inhibits all others.
The corresponding adjacency matrix exhibits a characteristic banded pattern with positive near-diagonal entries and negative long-range couplings (see \Cref{fig_beta}).
\begin{figure}[h]
    \centering
    
    \begin{subfigure}[b]{0.45\textwidth}
        \begin{overpic}[width=0.84\textwidth]{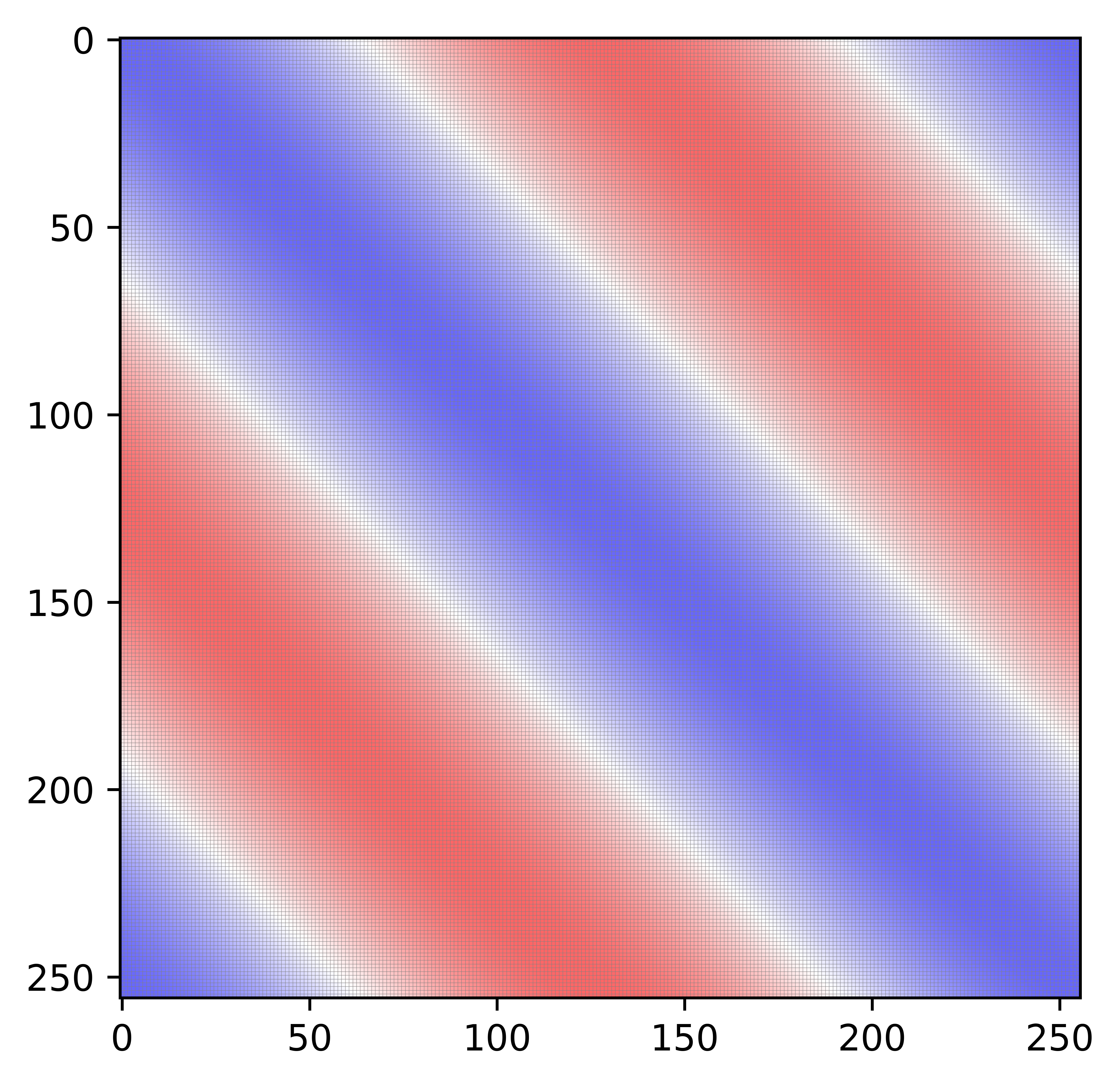}
            \put(30,30){$-$}
            \put(70,70){$-$}
            \put(50,50){$+$}
            \put(88,87){$+$}
            \put(14,13){$+$}
            \put(-3,50){$i$}
            \put(50,-2){$j$}
        \end{overpic}
        \caption{$\beta=-\pi/2$ (symmetric)}
        \label{subfig_beta2}
    \end{subfigure}
    \begin{subfigure}[b]{0.45\textwidth}
        \begin{overpic}[width=\textwidth]{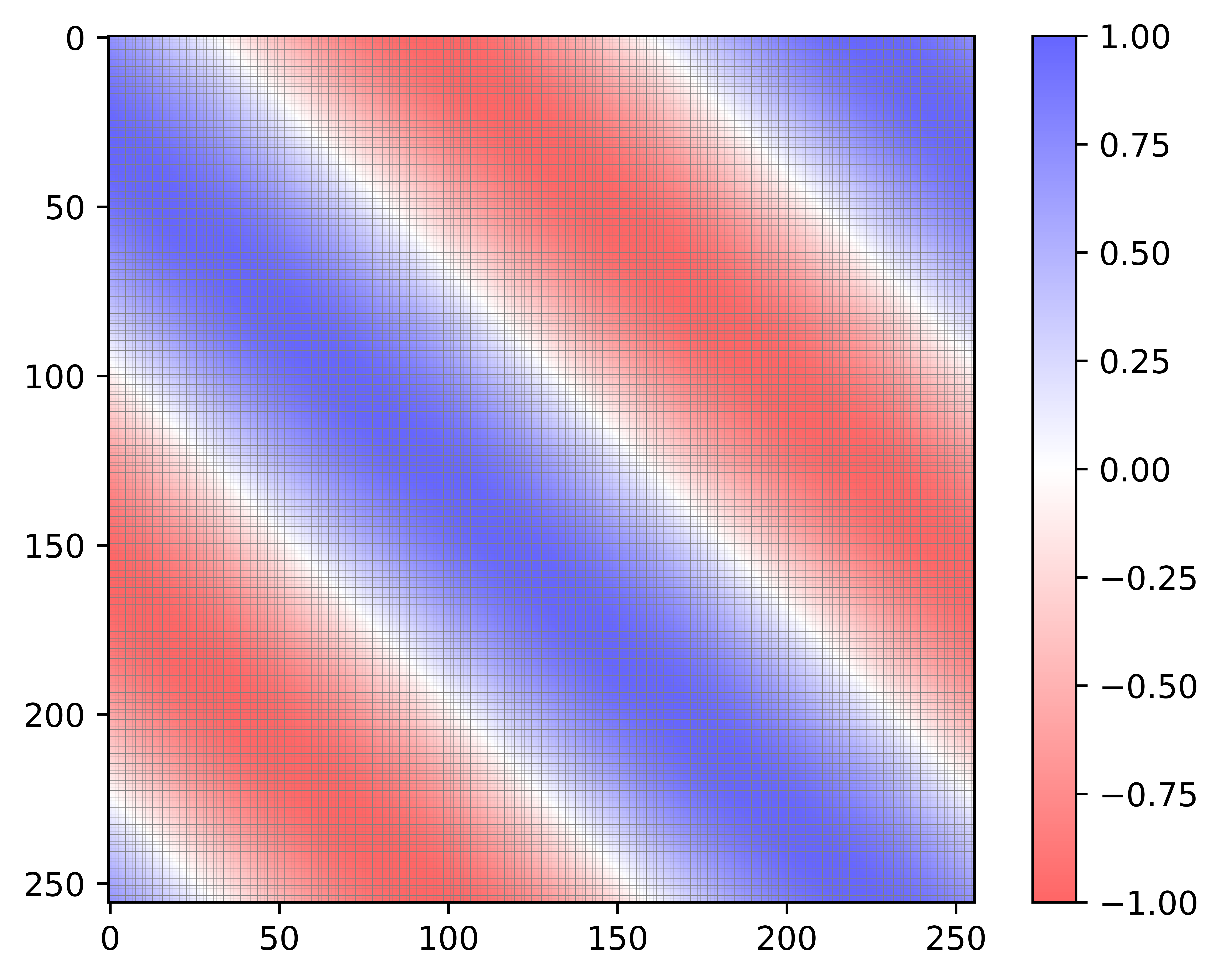}
            \put(9,7){$-$}
            \put(20,20){$+$}
            \put(37,37){$-$}
            \put(55,55){$+$}
            \put(70,70){$-$}
        \end{overpic}
        \caption{$\beta=-\pi/4$ (asymmetric)}
        \label{subfig_beta4}
    \end{subfigure}
    \caption{Color-coded adjacency matrix of network $\kappa_{ij}=-\sin(\theta_i-\theta_j+\beta)$}
    \label{fig_beta}
\end{figure}
This locally excitatory--globally inhibitory (LEGI) structure captures the essential features of networks induced by rotating-wave equilibria and will be analyzed in \Cref{subsec_legi}.

\vspace{.5cm}

In summary, the phase-locked solutions of the adaptive Kuramoto model naturally motivate two canonical
classes of static signed networks: block-structured networks and locally excitatory--globally inhibitory
networks.
The local stability of equilibria on these networks is investigated in the next section.
%%%%%%%%%%%%%%%%%%%%%%%%%%%%%%%%%%%%%%%%%%%%%%%%%%%%%%%%%%%%%%%%%%%%%%%%%%%%%%%%%%%%%%%%%%%%%%%
\section{Stability of equilibria under fixed networks}\label{Sec_stability}
In this section, we investigate the local stability of phase-locked equilibria arising on fixed networks. Motivated by the network structures obtained in \Cref{Sec_Pre}, we focus on two canonical classes of signed graphs that serve as representative fixed networks for the stability analysis of adaptive Kuramoto dynamics. For each class, we analyze the linearized dynamics around representative phase-locked states and characterize stability in terms of the spectral properties of the associated Jacobian matrices.

Specifically, \Cref{subsec_block} is devoted to block-structured signed networks, which capture clustered interaction patterns underlying synchronized and antipodal configurations, while \Cref{subsec_legi} examines locally excitatory--globally inhibitory (LEGI) networks, where spatially structured competition between excitation and inhibition gives rise to rotating-wave solutions.

%%%%%%%%%%%%%%%%%%%%%%%%%%%%%%%%%%%%%%%%%%%%%%%%%%%%%%%%%%%%%%%%%%%%%%%%%%%%%%%%%%%%%%%%%%%%%%%
\subsection{Kuramoto dynamics on block-structured signed networks}\label{subsec_block}
In this subsection, we study the local stability of phase-locked equilibria on block-structured signed networks. To isolate the role of network structure from adaptive dynamics, we consider the non-adaptive limit of system \eqref{adapkm} by setting $\epsilon = 0$, which yields the following dynamics on a fixed network $K$:
\begin{align*}
    \frac{d\theta_i}{dt}=\omega-\frac{1}{N}\sum_{j=1}^N\kappa_{ij}\sin(\theta_i-\theta_j+\alpha),
\end{align*}
where the coupling strengths $\kappa_{ij}$ are constant in time. Such networks arise naturally in adaptive Kuramoto dynamics, serving as asymptotic interaction structures associated with antipodal and multi-cluster states \cite{BSY2019}. Our goal is to identify how the network parameters and group sizes influence the stability of these canonical equilibria, particularly complete and antipodal synchronization.

We consider a block-structured network in which the vertex set $[N]$ is partitioned into $M$ disjoint groups,
\[
\bigcup_{m=1}^M \mathcal G_m = [N],
\quad
\mathcal G_m \cap \mathcal G_\ell = \emptyset, \quad\mbox{for}~~m\ne l.
\]
The coupling strength between two oscillators depends only on whether they belong to the same group or to different groups, i.e.,
interactions within a group have strength $a \in \mathbb R$, while interactions between distinct groups have strength $b \in \mathbb R$. Accordingly, the adjacency matrix $K$ takes the block form
\begin{align}\label{bl_st}
    K=
    \begin{bmatrix}
        \colorbox{pink!40}{ $A_1$ } & \fcolorbox{blue}{white}{$B_{12}$} & \cdots & \fcolorbox{blue}{white}{$B_{1M}$}\\
        \fcolorbox{blue}{white}{$B_{21}$} & \colorbox{pink!40}{ $A_2$ } & \cdots & \fcolorbox{blue}{white}{$B_{2M}$}\\
        \vdots & \vdots & \ddots & \vdots\\
        \fcolorbox{blue}{white}{$B_{M1}$} & \fcolorbox{blue}{white}{$B_{M2}$} & \cdots & \colorbox{pink!40}{ $A_M$ }
    \end{bmatrix},
\end{align}
where $A_m = a\,\mathbf 1_{|\mathcal G_m|\times|\mathcal G_m|}$ and
$B_{m\ell} = b\,\mathbf 1_{|\mathcal G_m|\times|\mathcal G_\ell|}$.
Here, $\mathbf 1_{p\times q}$ denotes the $p\times q$ matrix with all entries equal to one. When the intra-group interaction is positive and the inter-group interaction is negative, i.e., $a>0>b$, the resulting signed network is commonly referred to as a \emph{weakly structurally balanced network} \cite{SAB2019,ZC2014}.

The block structure \eqref{bl_st} of the network suggests that phase-locked equilibria may arise in which oscillators within each group share a common phase. Motivated by this observation, we consider equilibria of the reduced form
\[\theta_i = c_m\quad i\in\mathcal G_m,~~m\in[M].\]
Substituting this ansatz into the Kuramoto dynamics yields the reduced equilibrium conditions
\begin{equation}\label{eq:reduced_equilibrium}
    0 = \frac{b}{N}\sum_{\tilde m=1}^M |\mathcal G_{\tilde m}|\sin\big(c_{\tilde m}-c_m\big),\quad\forall~ m\in[M].
\end{equation}
To reformulate the state \eqref{eq:reduced_equilibrium}, it is convenient to introduce the weighted order parameter
$(R,\Psi)\in\mathbb R_{\ge0}\times\mathbb T^1$ defined by
\[
R(\Theta)e^{i\Psi(\Theta)}=\frac{1}{N}\sum_{j=1}^N e^{i\theta_j}.
\]
Under the group-wise consensus ansatz \eqref{eq:reduced_equilibrium}, this reduces to
\[R(c)e^{i\Psi(c)} :=\frac{1}{N}\sum_{m=1}^M |\mathcal G_m|e^{ic_m},\]
one readily verifies that \eqref{eq:reduced_equilibrium} implies either $R(c)=0$ or
\[\Psi(c)-c_m \in \{0,\pi\},\quad\forall~m\in[M].\]

The case $R(c)=0$ corresponds to a splay-type configuration with $M-2$ degrees of freedom.
For $M\ge3$, this yields a continuous family of equilibria, and standard linearization techniques are therefore not directly applicable.
We exclude this case from the present analysis.
When $M=2$, the condition $R(c)=0$ reduces to an antipodal configuration.
The case $M=1$ trivially yields complete synchronization.

Consequently, within the block-structured network \eqref{bl_st}, two canonical phase-locked equilibria naturally arise, that is, the completely synchronized state and the antipodal state. In the following, we analyze their local stability by examining the spectra of the corresponding Jacobian matrices.

\paragraph{Complete synchronization}
In this case, the Jacobian matrix is a multiple of Laplacian matrix of the network $K=(\kappa_{ij})$. In particular, it takes the form
\begin{align*}
    J(\Theta^\infty)=-\frac{1}{N}(D_K-K),
\end{align*}
where the degree matrix $D_K$ is defined by $(D_K)_{ii}=\sum_{j=1}^N\kappa_{ij}$. In the following proposition, we characterize the local stability of an antipodal equilibrium by computing the eigenvalues of the matrix $D_K-K$.
\begin{proposition}[Local Stability of Complete Synchronization]\label{prop_eigval_com_syn}
    The Laplacian matrix $L = D_K - K$ of a block-structured graph $K$ as in \eqref{bl_st} has the following eigenvalues:
    \begin{align*}
        &\lambda = a|\mathcal G_m| + b\bigl(N - |\mathcal G_m|\bigr),
        \quad \text{with multiplicity } |\mathcal G_m| - 1,
        \quad m = 1,\ldots,M,\\
        &\lambda = 0 \quad \text{(simple)}, \qquad
        \lambda = bN \quad \text{with multiplicity } M - 1.
    \end{align*}
\end{proposition}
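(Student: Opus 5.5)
We compute the spectrum of $L = D_K - K$ directly by exhibiting a complete family of eigenvectors adapted to the partition $\{\mathcal G_m\}_{m\in[M]}$. Write $n_m := |\mathcal G_m|$. Note that the statement takes $K$ literally as in \eqref{bl_st}, so the diagonal entries $\kappa_{ii}=a$ are included. They cancel in $L$ anyway, since $(D_K)_{ii}$ contains the same term $\kappa_{ii}$ that is subtracted. For $i\in\mathcal G_m$ the degree is
\[
(D_K)_{ii} = a n_m + b(N-n_m) =: d_m .
\]
Hence, for any vector $x$ and any $i\in\mathcal G_m$,
\[
(Lx)_i = d_m x_i - a\sum_{j\in\mathcal G_m} x_j - b\sum_{\ell\ne m}\sum_{j\in\mathcal G_\ell} x_j .
\]
The whole argument rests on this formula.

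\textbf{Step 1 (intra-group modes).} Take $x$ supported on a single group $\mathcal G_m$ with $\sum_{j\in\mathcal G_m}x_j=0$. Then both sums in the formula vanish for every $i$, so $Lx = d_m x$. Such vectors form a subspace of dimension $n_m-1$. This gives the eigenvalue $a n_m + b(N-n_m)$ with multiplicity at least $n_m-1$, for each $m$.

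\textbf{Step 2 (block-constant modes).} Take $x$ constant on each group, say $x_i = y_m$ for $i\in\mathcal G_m$, and set $S=\sum_\ell n_\ell y_\ell$. Then
\[
(Lx)_i = d_m y_m - a n_m y_m - b(S - n_m y_m) = bN y_m - bS .
\]
The $a$-terms cancel exactly, which is the key simplification. So on block-constant vectors, $L$ acts as $bN\,\mathrm{Id} - b\,\mathbf 1\, n^\top$ with $n=(n_1,\dots,n_M)^\top$.
\begin{itemize}
\item The constant vector $y=\mathbf 1$ has $S=N$, so it is an eigenvector with eigenvalue $0$.
\item Any $y$ with $\sum_\ell n_\ell y_\ell = 0$ has $S=0$, so it is an eigenvector with eigenvalue $bN$. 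These form an $(M-1)$-dimensional subspace.
\end{itemize}

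\textbf{Step 3 (completeness and multiplicities).} The subspaces from Steps 1 and 2 are mutually orthogonal. Their dimensions add up to
\[
\sum_m (n_m-1) + 1 + (M-1) = N ,
\]
so together they give an eigenbasis of $L$. Since $L$ is symmetric, the listed values, with the stated multiplicities, are exactly its spectrum counted with algebraic multiplicity. If some values coincide, for instance $d_m = bN$ when $a=b$, the multiplicities simply add up. This is consistent with the statement, read as listing eigenvalues with multiplicity.

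The only point needing care is the bookkeeping in Step 2, where the $a$-contributions must cancel, and stating that "simple" for $0$ is meant modulo such accidental coincidences. I do not expect a genuine obstacle.
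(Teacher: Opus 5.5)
Your proof is correct and follows essentially the same route as the paper. The paper omits this proof and points to its appendix argument for \Cref{prop_eigval_antipodal}, which uses your decomposition into intra-group zero-sum modes (eigenvalue $(D_K)_{ii}$) and group-wise constant modes; your reduction of $L$ on block-constant vectors to $bN\,\mathrm{Id}-b\,\mathbf 1 n^\top$, the explicit orthogonality and dimension count, and your caveat that ``simple'' fails in degenerate cases such as $b=0$ make your version slightly tighter than the paper's sketch.
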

\begin{figure}[t]
    \centering
    \captionsetup[subfigure]{width=0.8\linewidth}
    \begin{subfigure}[b]{0.48\textwidth}
    \centering
        \begin{overpic}[width=0.8\textwidth]{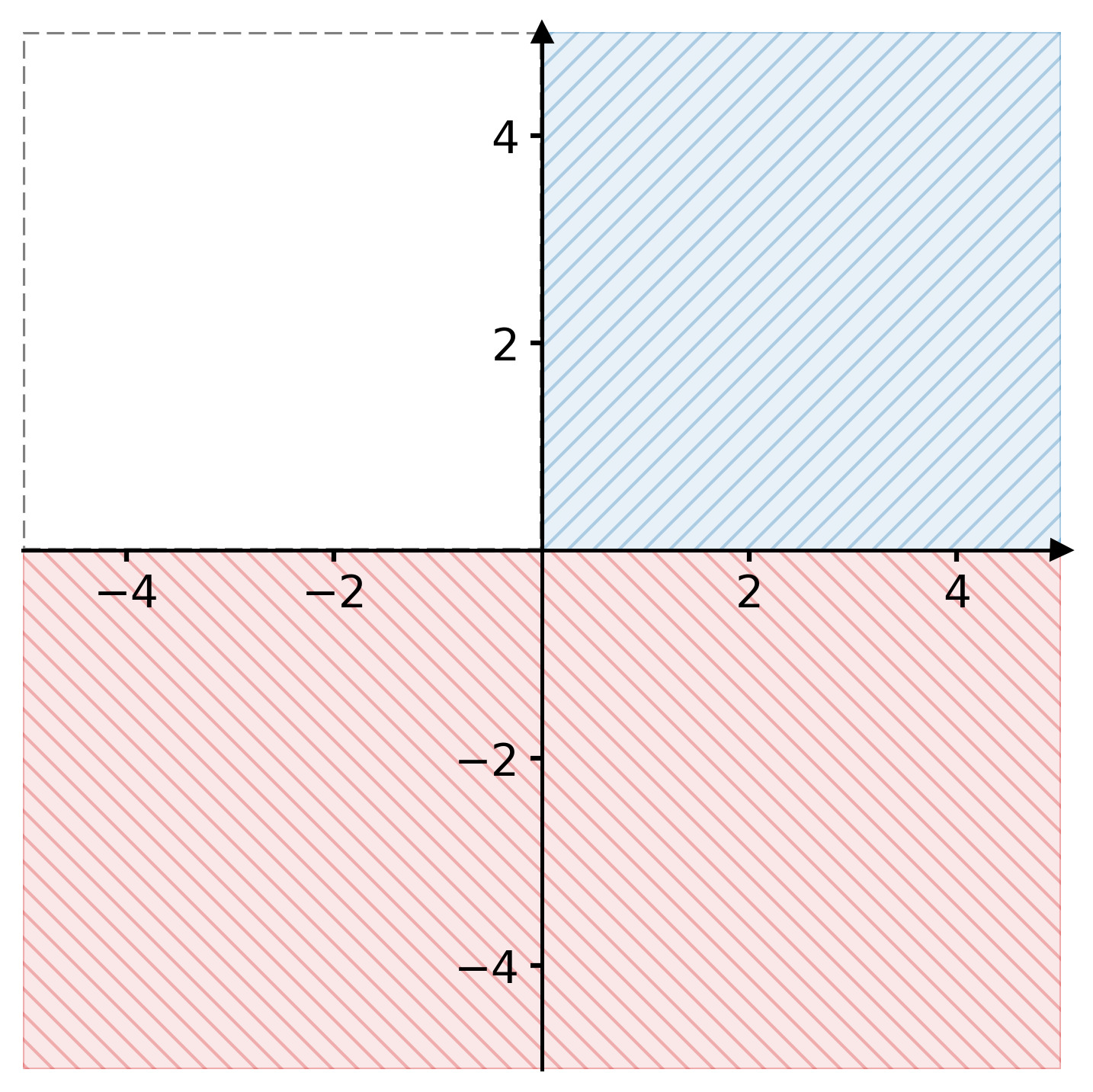}
            \put(99, 52){$a$}
            \put(52, 95){$b$}
            \put(65, 73){\color{blue} stable}
            \put(15, 25){\color{red} unstable}
            \put(9, 73){(Refer to (b))}
        \end{overpic}
        \caption{Stability regions in the $(a, b)$-plane. The local stability in the region $a<0<b$ depends on the ratio $b/a$ and the maximal group proportion.}
    \end{subfigure}
    \begin{subfigure}[b]{0.48\textwidth}
    \centering
        \begin{overpic}[width=0.9\textwidth]{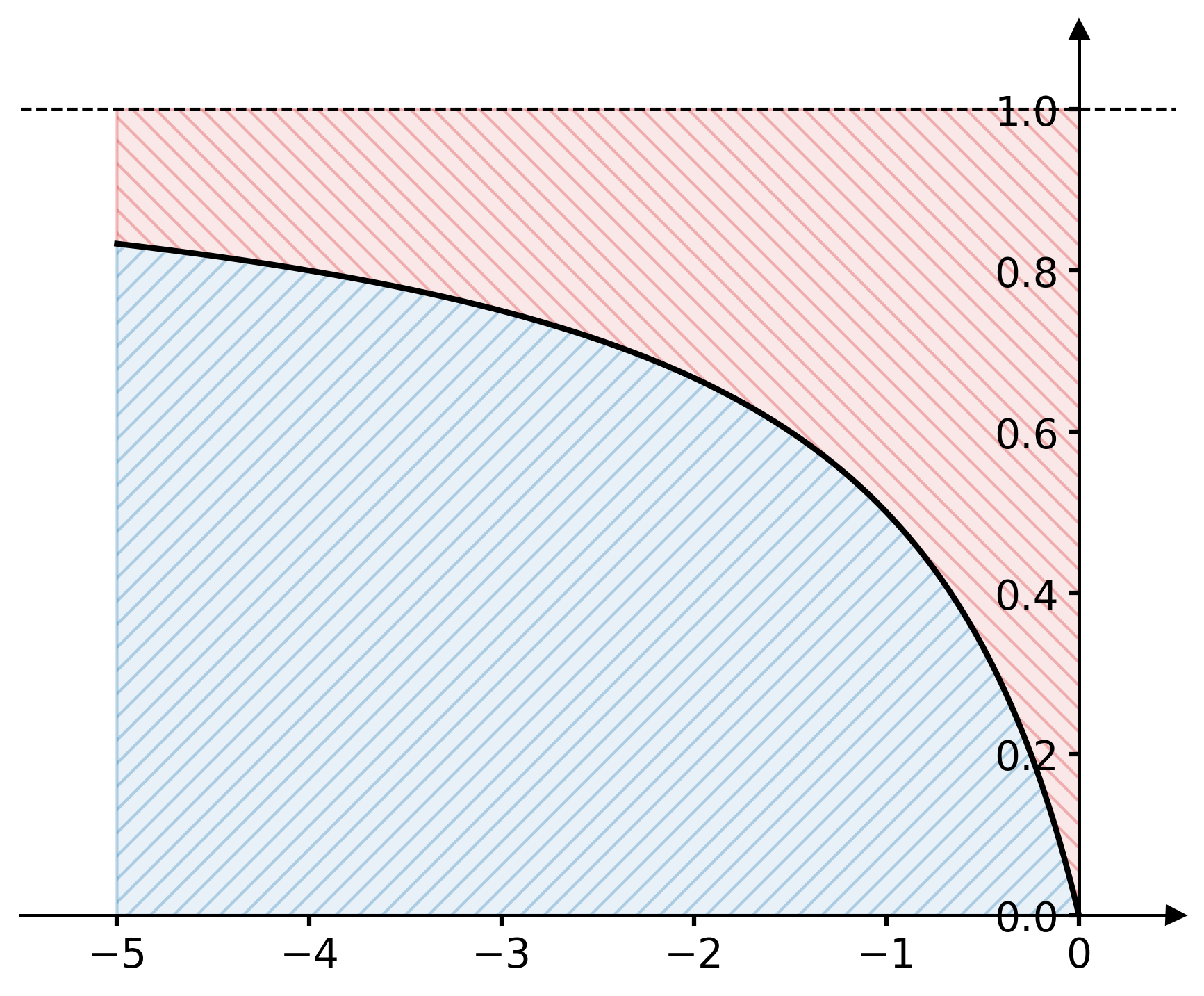}
            \put(97, 10){$b/a$}
            \put(92, 78){$\max_m \frac{|\mathcal{G}_m|}{N}$}
            \put(40, 20){\color{blue} stable}
            \put(50, 60){\color{red} unstable}
            \put(30, 49){\Large$\frac{b}{b-a}$}
        \end{overpic}
        \caption{Phase diagram for the regime $a<0<b$. The solid curve represents the critical boundary ($\lambda=0$) derived in \Cref{prop_eigval_com_syn}, separating the stable (blue) and unstable (red) regions.}
    \end{subfigure}
    \caption{Local stability of the complete synchronization state based on \Cref{prop_eigval_com_syn}. The region where the stability cannot be determined just by $a$ and $b$, which was left blank in (a), is depicted in (b) based on $b/a$ and the maximal group proportion.}
    \label{figure_stability}
\end{figure}
We omit the proof, as it is analogous to the proof of \Cref{prop_eigval_antipodal}, which is provided in the Appendix. \Cref{prop_eigval_com_syn} characterizes the spectrum of the Laplacian for the block-structured network and hence yields an explicit stability criterion for complete synchronization in terms of $(a,b)$ and the group sizes. In particular, the complete synchronized equilibrium is unstable when $b<0$ with $M\ge2$. In the special case $M=1$, the system reduces to the classical Kuramoto model of identical oscillators on a complete network with $a>0$, in which the fully synchronized state is stable and this is a well-known fact. \Cref{figure_stability} summarizes the resulting stability region.

\paragraph{Antipodal states.}
We consider an \emph{antipodal equilibrium}, meaning that the oscillators are partitioned into two nontrivial phase classes such that the phase difference between the two classes is exactly $\pi$.
Since the dynamics are invariant under global phase shifts, the common phase of each class can be chosen arbitrarily.
Without loss of generality, we fix a representative antipodal configuration by setting
\[
\mathcal G^{(0)} \cup \mathcal G^{(\pi)} = [M],
\qquad
\mathcal G^{(0)} \cap \mathcal G^{(\pi)} = \emptyset,
\]
and defining
\begin{align}\label{anti_cm}
c_m=
\begin{dcases}
0, & m\in\mathcal G^{(0)},\\
\pi, & m\in\mathcal G^{(\pi)}.
\end{dcases}
\end{align}
Let $\Theta^\infty$ be an antipodal equilibrium satisfying \eqref{anti_cm}.
Then, for $i\in\mathcal G_m$ and $j\in\mathcal G_\ell$, the phase differences satisfy
\[
\theta_j^\infty-\theta_i^\infty = c_\ell - c_m \in \{0,\pi\},
\]
and hence $\cos(c_m-c_\ell)\in\{1,-1\}$.
As a result, the Jacobian can be written in the Laplacian form
\[
J(\Theta^\infty) = -\frac{1}{N}\big(D-A\big),
\]
where the matrix $A$ has a block-constant structure with entries given by
\begin{align}\label{A_def}
\begin{aligned}
A_{ij}=
\begin{dcases}
\kappa_{ii}, & i=j,\\[0.2em]
\kappa_{ij}\cos(c_m-c_\ell), & i\neq j,
\end{dcases}
\qquad
i\in\mathcal G_m,\ j\in\mathcal G_\ell.
\end{aligned}
\end{align}
This structure can be summarized schematically as the following block matrix:
\begin{center}
\begin{tikzpicture}[scale=0.7, transform shape]
    \node at (-1.5,3) {\Large$A=$};
    
    \node[draw, minimum size=2.8em] at (0,6) {\Large$a$};
    \node[draw, minimum size=2.8em] at (1,5) {\Large$a$};
    \node at (2,4) {$\ddots$};
    \node[draw, minimum size=2.8em] at (3,3) {\Large$a$};
    \node[draw, minimum size=2.8em] at (4,2) {\Large$a$};
    \node at (5,1) {$\ddots$};
    \node[draw, minimum size=2.8em] at (6,0) {\Large$a$};
    
    \draw (3.5,-0.5) -- (3.5,6.5);
    \draw (-0.5,2.5) -- (6.5,2.5);

    \node at (2.5,5.5) {\Large$b$};
    \node at (0.5,3.5) {\Large$b$};
    \node at (5.5,1.5) {\Large$b$};
    \node at (4.5,0.5) {\Large$b$};
    \node at (5,4.5) {\Large$-b$};
    \node at (1.5,1) {\Large$-b$};

    % 왼쪽 bracket
    \draw[thick] (-0.8,6.5) -- (-0.8,-0.5);
    \draw[thick] (-0.8,6.5) -- (-0.6,6.5);
    \draw[thick] (-0.8,-0.5) -- (-0.6,-0.5);

    % 오른쪽 bracket
    \draw[thick] (6.8,6.5) -- (6.8,-0.5);
    \draw[thick] (6.8,6.5) -- (6.6,6.5);
    \draw[thick] (6.8,-0.5) -- (6.6,-0.5);

    \draw [decorate,decoration={brace,amplitude=5pt,raise=1ex}]
    (-0.5,6.5) -- (3.4,6.5) node[midway,yshift=2em]{\Large$\mathcal G^{(0)}$};
    \draw [decorate,decoration={brace,amplitude=5pt,raise=1ex}]
    (3.6,6.5) -- (6.5,6.5) node[midway,yshift=2em]{\Large$\mathcal G^{(\pi)}$};
\end{tikzpicture}
\end{center}
This leads to the following proposition, which characterizes the eigenvalues of $-J(\Theta^\infty)$.
\begin{proposition}[Local Stability of Antipodal States]\label{prop_eigval_antipodal}
    The Laplacian matrix $L=D_A-A$ of the matrix $A$ defined in \eqref{A_def} has the eigenvalues as follows:
    \begin{align*}
        &\lambda=0~~\mbox{(simple)},\quad\lambda=-bN,~~\mbox{(simple)},\\
        &\lambda=b\left(2\sum_{m\in\mathcal G^{(x)}}|\mathcal G_m|-N\right),~~\mbox{with multiplicity}~~\sum_{m\in\mathcal G^{(x)}}|\mathcal G_m|-1,\\
        &\lambda=(a-b)|\mathcal G_m|+b\bigg(\sum_{\tilde m\in\mathcal G^{(0)}}|\mathcal G_{\tilde m}|-\sum_{\tilde m\in\mathcal G^{(\pi)}}|\mathcal G_{\tilde m}|\bigg)\left(2\mathbf{1}_{\mathcal G^{(0)}}(m)-1\right),\\
        &\mbox{with multiplcity}~|\mathcal G_m|-1,~~\forall~m\in[M],
    \end{align*}
\end{proposition}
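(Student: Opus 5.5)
The plan is to exploit the block-constant structure of $A$ and split $\mathbb R^N$ into invariant subspaces on which $L=D_A-A$ acts as a scalar. Write $s_i=+1$ for $i$ in a group with index in $\mathcal G^{(0)}$ and $s_i=-1$ otherwise. Then $\cos(c_m-c_\ell)=s_is_j$, so for $i\neq j$ we have $A_{ij}=\kappa_{ij}s_is_j$. Set $N_0=\sum_{m\in\mathcal G^{(0)}}|\mathcal G_m|$ and $N_\pi=\sum_{m\in\mathcal G^{(\pi)}}|\mathcal G_m|$.

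\textbf{Step 1: degrees.} For $i\in\mathcal G_m$, the diagonal term $\kappa_{ii}$ appears in both $D_A$ and $A$ and cancels in $L$. Summing the remaining row entries gives the degree
\[
d_m=(a-b)|\mathcal G_m|+b\,s_m(N_0-N_\pi),
\]
which depends only on $m$. Note that $s_m=2\mathbf 1_{\mathcal G^{(0)}}(m)-1$, so this is exactly the fourth listed eigenvalue.

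\textbf{Step 2: intra-group modes.} Take $v$ supported on a single $\mathcal G_m$ with $\sum_{i\in\mathcal G_m}v_i=0$. Every row of $A$ is constant on each block, so $Av=0$ and hence $Lv=d_mv$. This yields the eigenvalue $d_m$ with multiplicity $|\mathcal G_m|-1$ for each $m\in[M]$.

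\textbf{Step 3: block-constant vectors within one phase class.} The orthogonal complement of the Step 2 modes is the $M$-dimensional space of block-constant vectors, $v|_{\mathcal G_\ell}=x_\ell$. On it I first take $x$ supported on groups $\ell$ in a single class $\mathcal G^{(x)}$, with the weighted sum $\sum_\ell|\mathcal G_\ell|x_\ell=0$. I then compute $(Av)_i$ in two cases.
\begin{itemize}
\item If $i$ lies in the other class, $(Av)_i$ is a multiple of $b\sum_\ell|\mathcal G_\ell|x_\ell=0$.
\item If $i\in\mathcal G_m$ with $m\in\mathcal G^{(x)}$, then $(Av)_i=a|\mathcal G_m|x_m+b\sum_{\ell\neq m}|\mathcal G_\ell|x_\ell=(a-b)|\mathcal G_m|x_m$.
\end{itemize}
Combining this with Step 1 gives
\[
(Lv)_i=b\,s_x(N_0-N_\pi)\,x_m=b(2N_x-N)\,x_m .
\]
So $b(2N_x-N)$ is an eigenvalue, where $N_x$ denotes $N_0$ or $N_\pi$. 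Its multiplicity is the number of groups in $\mathcal G^{(x)}$ minus one, and I will check this against the stated multiplicity, adjusting the indexing if needed.

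\textbf{Step 4: the remaining two directions.} What is left is the span of the vectors constant on each whole class. This contains $\mathbf 1$, with $L\mathbf 1=0$ since $L$ is a Laplacian. The $2\times2$ quotient matrix on this span has eigenvalues $0$ and $\operatorname{tr}(\cdot)$, so I compute its trace. I expect the trace to equal $-bN$, from the cross terms $\pm b N_0,\pm bN_\pi$, and I would confirm this with the explicit test vector $v=N_\pi$ on class $0$ and $v=-N_0$ on class $\pi$.

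\textbf{Step 5: count.} The multiplicities add up to
\[
\sum_m(|\mathcal G_m|-1)+(|\mathcal G^{(0)}|-1)+(|\mathcal G^{(\pi)}|-1)+2=N,
\]
so the spectrum is complete.

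The main obstacle is the bookkeeping of signs in Steps 3 and 4, in particular distinguishing $A_{ii}=\kappa_{ii}$ from the off-diagonal within-block entries, and reconciling the stated multiplicity in the third line of \Cref{prop_eigval_antipodal}.
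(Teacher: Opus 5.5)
Your proof is correct and follows the paper's appendix essentially step for step. The paper uses the same three pieces: intra-group zero-sum modes giving the degree eigenvalues $(D_A)_{ii}$, within-class weighted-zero-sum block-constant modes giving $b(2N_x-N)$, and the two-dimensional class-constant span giving $0$ and $-bN$, where your trace argument is a slightly cleaner route than the paper's direct substitution. The multiplicity you derive for $b(2N_x-N)$, the number of groups in $\mathcal G^{(x)}$ minus one, is the right one and is also what the paper's own proof obtains; the $\sum_{m\in\mathcal G^{(x)}}|\mathcal G_m|-1$ in the statement is a misprint, as your dimension count in Step 5 confirms.
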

The proof of \Cref{prop_eigval_antipodal} is provided in \Cref{sec_appendix}. \Cref{prop_eigval_antipodal} shows that antipodal configurations are generically unstable on block-structured networks.
Any exceptional stable case ($M=2$) is highly constrained and will be treated separately in \Cref{sec_special}.
% \begin{lemma}\label{lem_nec_cond}
%     Suppose that the point $x^*$ is an equilibrium to $\dot x=f(x)$ and the Jacobian matrix $J$ at $x^*$ is symmetric. If there exists a positive diagonal entry in the Jacobian matrix at $x^*$, then the equilibrium is unstable.
% \end{lemma}
% \begin{proof}
%     Without loss of generality, assume that $i$-th diagonal element is positive. By the min-max theorem, we have
%     \begin{align*}
%         \lambda_1=\max_{\|v\|=1}v^\intercal J(x^*)v\ge e_i^\intercal J(x^*)e_i=\big[J(x^*)\big]_{ii}>0,
%     \end{align*}
%     which implies the unstability of $x^*$.
% \end{proof}
%%%%%%%%%%%%%%%%%%%%%%%%%%%%%%%%%%%%%%%%%%%%%%%%%%%%%%%%%%%%%%%%%%%%%%%%%%%%%%%%%%%%%%%%%%%%%%%
\subsection{Kuramoto dynamics on locally excitatory-globally inhibitory networks}\label{subsec_legi}
We consider a signed network $K=(\kappa_{ij})_{i,j\in[N]}\in\{-p,1\}^{N\times N}$ with a circular band structure, where $p>0$ controls the relative strength of inhibition versus excitement. Each node $i$ is positively connected positively with weight $1$ to its $2W$ nearest neighbors along the ring, and negatively connected with weight $-p$ to all other nodes. To distinguish this network from the complete graph while maintaining connectivity, we take
\begin{align*}
    1\le W\le\frac{N-(N\mod{2})}{2}-1.
\end{align*}
More precisely, the adjacency matrix $K=(\kappa_{ij})$ is defined by
\begin{align*}
    \kappa_{ij}=
    \begin{dcases}
        1,\quad&\mbox{if}~~d_N(i,j)\le W,\\
        -p,\quad&\mbox{otherwise},
    \end{dcases},\quad\forall~i,j\in[N],
\end{align*}
where the index distance $d_N$ is given by
\begin{align*}
    d_N(i,j)=\min\Big\{|i-j|,N-|i-j|\Big\}.
\end{align*}
Unlike weakly structurally balanced networks, this network does not admit a partition into positive clusters separated by negative edges. Instead, it provides a homogeneous and highly symmetric setting to investigate the interplay between local excitation and long-range inhibition.

Owing to this translational symmtery, the Kuramoto dynamics on the circular band network admits \textit{rotating-wave} equilibria of the form
\begin{align}\label{rot_wave}
    \theta_j^\infty=\frac{2\pi mj}{N},\quad\forall~j\in[N],
\end{align}
for any integer $0\le m<N$. Due to the symmetry of $\{\theta_i^\infty\}_{i\in[N]}$ and of the coupling matrix $K$, the configuration \eqref{rot_wave} is an equilibrium of
\begin{align}\label{Ad_KM}
    \dot\theta_i=\frac{1}{N}\sum_{j=1}^N\kappa_{ij}\sin(\theta_j-\theta_i),\quad\forall~i\in[N].
\end{align}
The Jacobian matrix at \eqref{rot_wave} is given by
\begin{align*}
    \Big[{\bf J}_{\Theta}(\theta_1^\infty,\cdots,\theta_N^\infty)\Big]_{ij}=
    \begin{dcases}
        -\frac{1}{N}\sum_{\substack{k=1\\k\ne i}}^N\kappa_{ik}\cos\left(\frac{2\pi m(k-i)}{N}\right),\quad&\mbox{if}~~i=j,\\
        \frac{1}{N}\kappa_{ij}\cos\left(\frac{2\pi m(j-i)}{N}\right),&\mbox{otherwise}.
    \end{dcases}
\end{align*}
Due to the translational symmetry of both the network and the rotating-wave equilibrium, this Jacobian is circulant. Consequently, its eigenvalues can be computed explicitly:
\begin{align*}
    \lambda_k&=-2(1+p)\sum_{j=1}^W\cos\left(\frac{2\pi mj}{N}\right)\left(1-\cos\left(\frac{2\pi kj}{N}\right)\right)+p\sum_{j=1}^{N}\cos\left(\frac{2\pi mj}{N}\right)\left(1-\cos\left(\frac{2\pi kj}{N}\right)\right),
\end{align*}
for each $k=0,\cdots,N-1$. For notational convenience, define
\begin{align*}
    S_J(m,k):=\sum_{j=1}^J\cos\left(\frac{2\pi mj}{N}\right)\left(1-\cos\left(\frac{2\pi kj}{N}\right)\right),\quad\forall~J=1,\cdots,N.
\end{align*}

For the equilibrium \eqref{rot_wave} to be locally stable, a necessary condition is $\lambda_k\le0$ for all $k=0,\cdots,N-1$. Moreover, this condition is sufficient provided that the equality does not occur. In what follows, we focus on the necessary condition. The corresponding strict-inequality condition will be noted at the end. Equivalently, $\lambda_k\le0$ can be written as
\begin{align}\label{lam_cond}
    S_N(m,k)\le\frac{2(1+p)}{p}S_W(m,k),\quad k=0,\cdots,N-1.
\end{align}
Using the symmetry of the cosine function, the left hand side of \eqref{lam_cond} can be computed explicitly:
\begin{align*}
    S_N(m,k)&=N\mathbf{1}_{m\equiv0\mod{N}}-\frac{N}{2}\left(\mathbf{1}_{m+k\equiv0\mod{N}}+\mathbf{1}_{m-k\equiv0\mod{N}}\right)\\
    &=\begin{dcases}
        N(1-\mathbf{1}_{k\equiv0\mod{N}}),\quad&\mbox{if}~~m=0,\\
        -\frac{N}{2},&\mbox{if}~~m\ne0~\mbox{and}~m\in\{k,N-k\},\\
        0,&\mbox{otherwise.}
    \end{dcases}
\end{align*}
Hence, for $p\in(0,\infty)$, the stability condition \eqref{lam_cond} may impose a nontrivial upper bound on $p$ in the case $m=0$, whereas a nontrivial lower bound may arise when $m\neq 0$.

\paragraph{Case 1~($m=0$):}
This case corresponds to the complete synchronization. In this setting, the relation \eqref{lam_cond} reduces to
\begin{align*}
    \sum_{j=1}^N\left(1-\cos\left(\frac{2\pi kj}{N}\right)\right)\le2\left(1+\frac{1}{p}\right)\sum_{j=1}^W\left(1-\cos\left(\frac{2\pi kj}{N}\right)\right),
\end{align*}
for all $k=0,\cdots,N-1$. For each fixed $k$, since all summands are nonnegative, there exists a maximum admissible value $p_k^*(W)$ for which the relation \eqref{lam_cond} holds. Moreover, $p_k^*(W)$ is monotone increasing in $W$. Define
\begin{align*}
    p^*(W):=\min_{0\le k\le N-1}p_k^*(W),\quad\forall~0\le W\le\left\lfloor\frac{N-1}{2}\right\rfloor,
\end{align*}
which is also monotone increasing. Consequently, the necessary condition for local stability is given by
\[0<p\le p^*(W).\]

\paragraph{Case 2 ($m\ne0$):}
This case corresponds to an $m$-twist rotating-wave on the ring (equivalently, to $N-m$).

If $k\notin\{m,N-m\}$, then $S_N(m,k)=0$. Thus, the lower bound of $p$, denoted by $p_{*k}(m,W)$, is given by
\begin{align*}
    p_{*k}(m,W)=
    \begin{dcases}
        0,\quad&\mbox{if}~~S_W(m,k)\ge0,\\
        +\infty,&\mbox{if}~~S_W(m,k)<0.
    \end{dcases}
\end{align*}
If $k=m$ or $k=N-m$, then $S_N(m,k)=-\frac{N}{2}$, and hence
\begin{align*}
    p_{*k}(m,W)=
    \begin{dcases}
        0,\quad&\mbox{if}~~S_W(m,k)\ge0,\\
        \left(-\frac{N}{4S_W(m,k)}-1\right)^{-1},&\mbox{if}~~-\frac{N}{4S_W(m,k)}-1>0,\\
        +\infty,&\mbox{otherwise.}
    \end{dcases}
\end{align*}
Therefore, if there exists any $k\in\{0,\cdots,N-1\}$ such that either (i) $k\in\{m,N-m\}$ and $S_W(m,k)<0$, or (ii) $S_W(m,k)<-\frac{N}{4}$, then no positive $p$ satisfies the relation \eqref{lam_cond} for the given $m$ and $W$. Otherwise, defining
\begin{align*}
    p_*(m,W):=\max_{0\le k\le N-1}p_{*k}(m,W),\quad\forall~0\le W\le\left\lfloor\frac{N-1}{2}\right\rfloor,
\end{align*}
we obtain the necessary condition
\[p_*(m,W)\le p<\infty.\]

As an example, the summands in $S_W(m,k)$ satisfy
\begin{align*}
    \cos\left(\frac{2\pi mj}{N}\right)\left(1-\cos\left(\frac{2\pi kj}{N}\right)\right)\ge0,\quad1\le j\le\frac{N}{4m},
\end{align*}
Hence, for $W\le\frac{N}{4m}$, the relations \eqref{lam_cond} admits a nonempty admissible range of $p$. This theoretical insight is confirmed numerically in \Cref{fig_circulant}.
\begin{figure}[h]
    \centering
    \begin{subfigure}[b]{0.24\textwidth}
        \begin{overpic}[width=\textwidth]{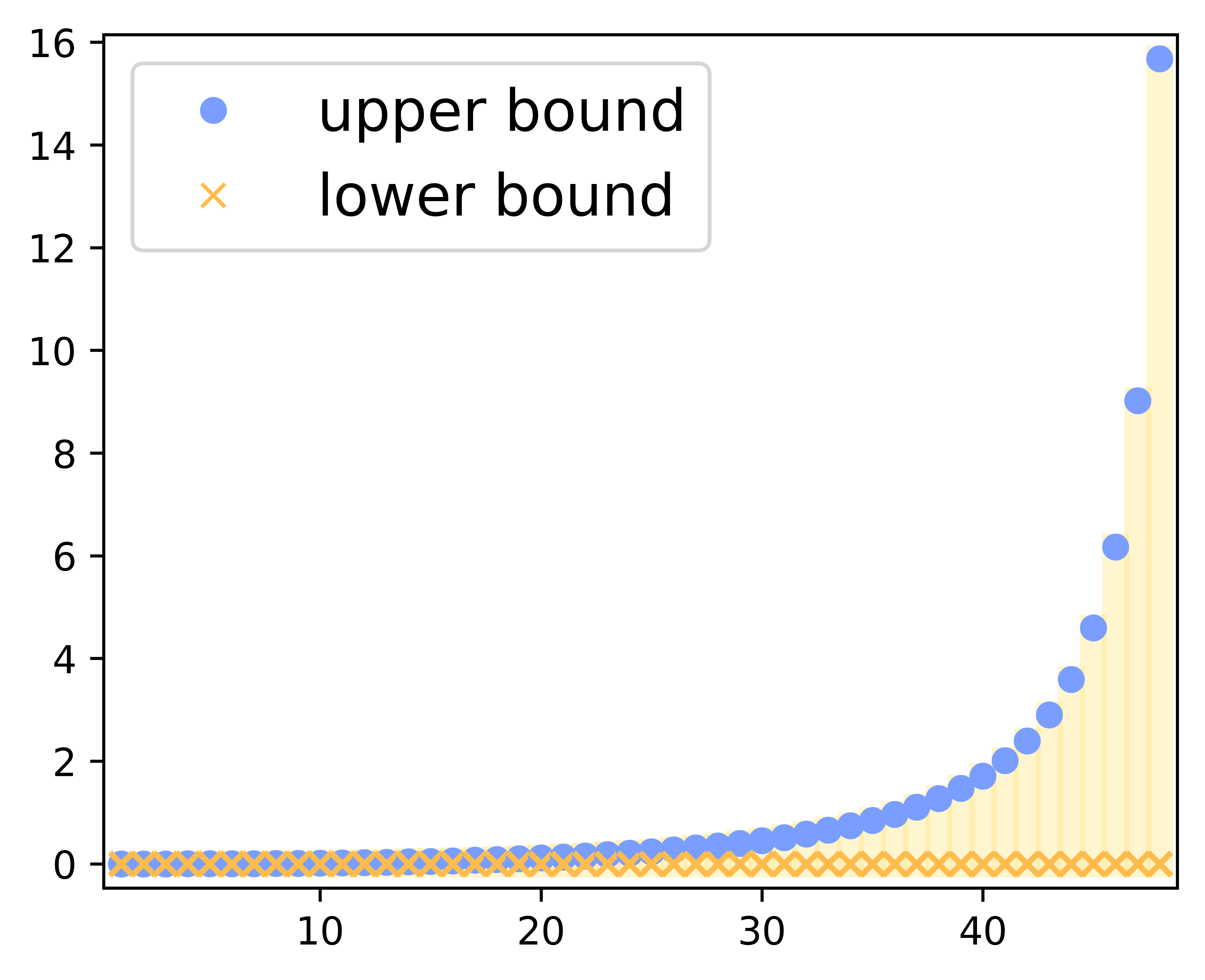}
            \put(-5,42){\small$p$}
            \put(50,-3){\small$W$}
        \end{overpic}
        \caption{$m=0$}
    \end{subfigure}
    \begin{subfigure}[b]{0.24\textwidth}
        \begin{overpic}[width=\textwidth]{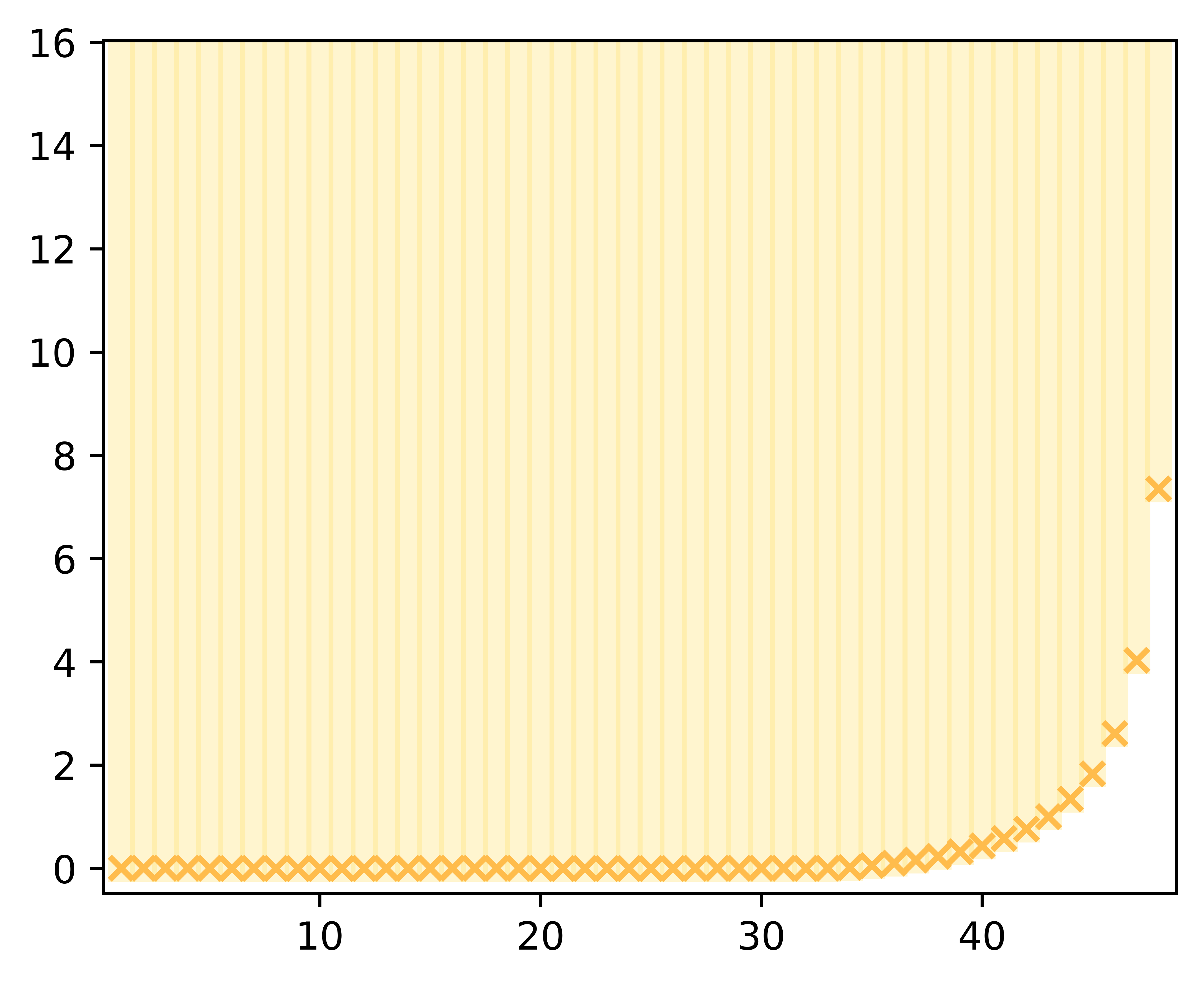}
            
        \end{overpic}
        \caption{$m=1$}
        % \label{sub_}
    \end{subfigure}
    \begin{subfigure}[b]{0.24\textwidth}
        \begin{overpic}[width=\textwidth]{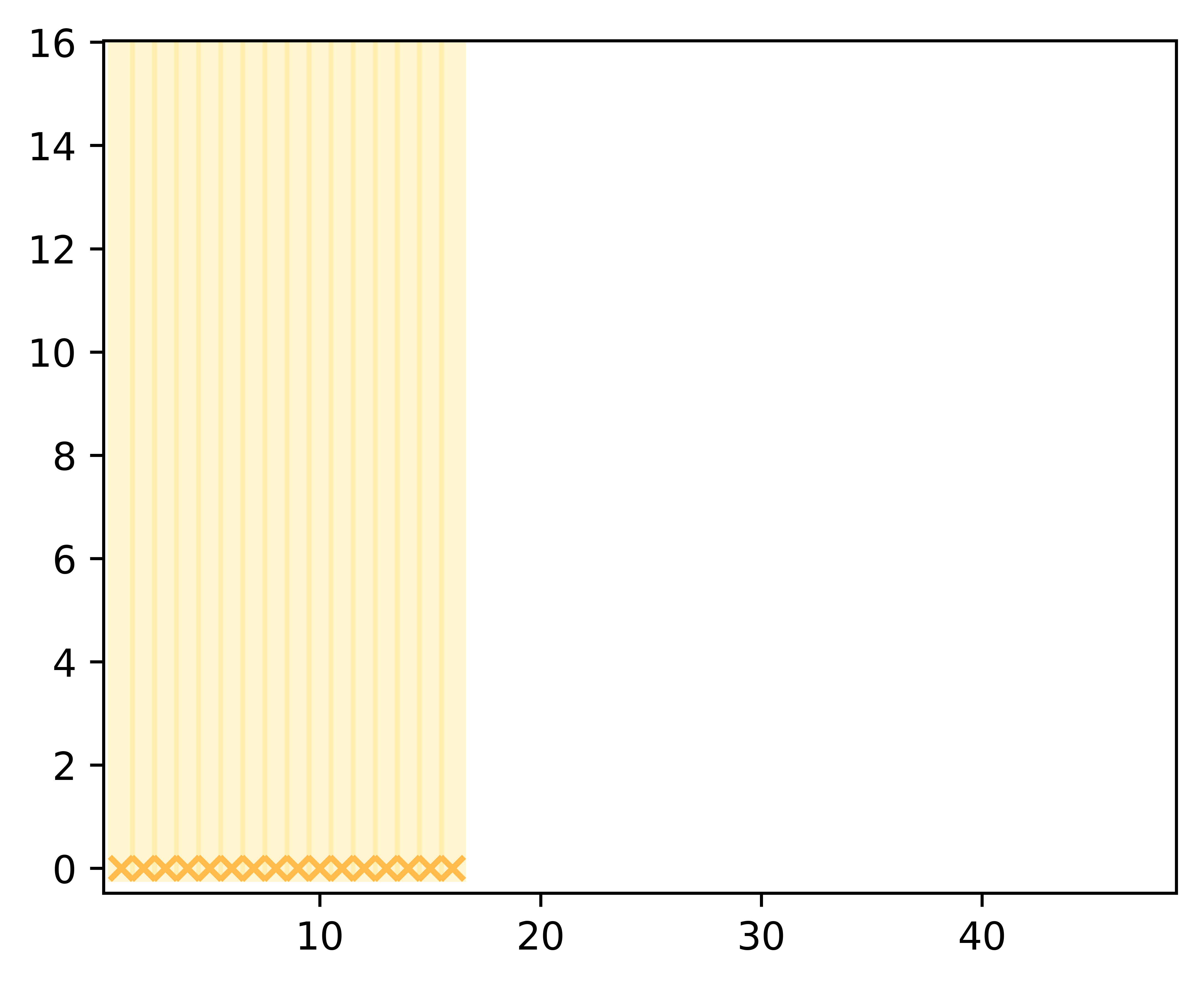}
            
        \end{overpic}
        \caption{$m=2$}
    \end{subfigure}
    \begin{subfigure}[b]{0.24\textwidth}
        \begin{overpic}[width=\textwidth]{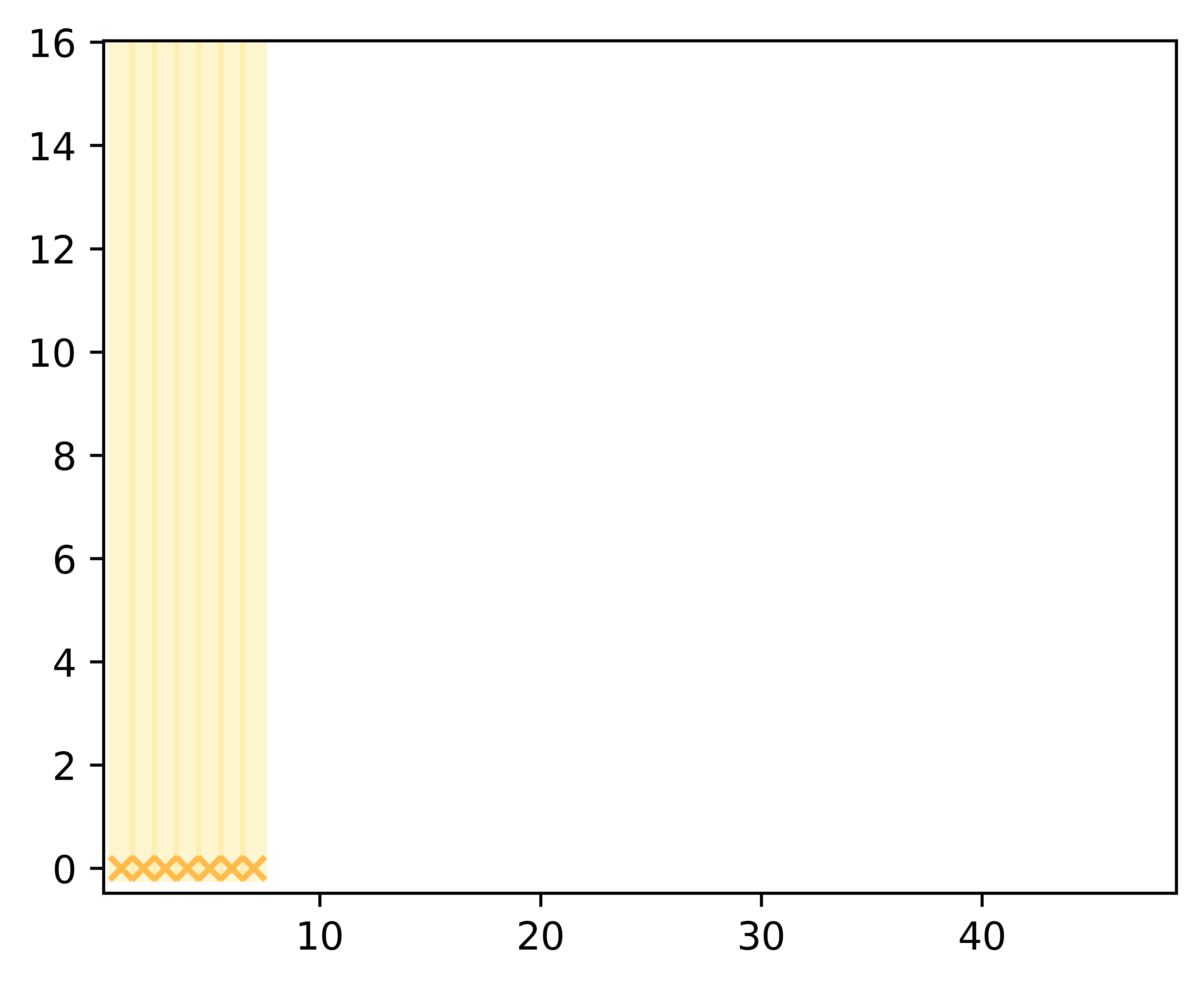}
            
        \end{overpic}
        \caption{$m=4$}
    \end{subfigure}
    \caption{The admissible range of $p$ to satisfy \eqref{lam_cond}. Blue dots and yellow crosses indicate the upper bound and lower bounds, respectively, and the yellow line represents the admissible set of $p$. The absence of markers implies that the admissible set is empty. As demonstrated, the admissible range of $W$ decreases with increasing $m$, suggesting a roughly inverse relationship between the two parameters. All simulations are based on $N=100$.}
    \label{fig_circulant}
\end{figure}

Taken together, the above analysis highlights intrinsic limitations of static signed networks in supporting stable nontrivial phase-locked states.
%%%%%%%%%%%%%%%%%%%%%%%%%%%%%%%%%%%%%%%%%%%%%%%%%%%%%%%%%%%%%%%%%%%%%%%%%%%%%%%%%%%%%%%%%%%%%%%
%%%%%%%%%%%%%%%%%%%%%%%%%%%%%%%%%%%%%%%%%%%%%%%%%%%%%%%%%%%%%%%%%%%%%%%%%%%%%%%%%%%%%%%%%%%%%%%
\section{Complete synchronization and antipodal-states in adaptive Kuramoto network}\label{sec_special}
In this section, we provide the sufficient framework of initial data and parameters under which complete synchronization or antipodal-states emerge in the adaptive Kuramoto networks,
\begin{align}\label{KM_0}
\begin{aligned}
    \begin{dcases}
        \dot\theta_i=-\frac{1}{N}\sum_{j=1}^N\kappa_{ij}\sin(\theta_i-\theta_j+\alpha),\quad\forall~i\in[N],\\
        \dot\kappa_{ij}=-\varepsilon\big(\sin(\theta_i-\theta_j+\beta)+\kappa_{ij}\big),\quad\forall~i,j\in[N].
    \end{dcases}
\end{aligned}
\end{align}
% Simply, we can check
% \begin{align*}
%     \lim_{t\to\infty}\kappa_{ii}(t)=-\sin\beta,\quad\forall~i=1,\cdots,N.
% \end{align*}

We use the method of timescale separation to study multi-timescale systems by analyzing the asymptotic behavior of fast variables and incorporating it into the slow dynamics. This approach is directly motivated by Fenichel’s theorem, which guarantees that a decomposition of fast and slow subsystems is possible in many cases.

First, we consider an invariant set in the adaptive Kuramoto networks \eqref{KM_0}. For constants $c\in(0,\pi)$ and $\delta\in[0,1]$, we define
\begin{align}\label{inv_set}
\begin{aligned}
    \mathcal A_{c,\delta}:=\Big\{(\Theta,K)~:~&\exists \mbox{ a partition }\{\mathcal N_1,\mathcal N_2\}\mbox{ of }[N]\mbox{ such that }\\
    &\theta_i\in[0,c],~\forall~i\in\mathcal N_1,~\theta_i\in[\pi,\pi+c],~\forall~i\in\mathcal N_2,\\
    &\kappa_{ij}\in[\delta,1],~\forall~(i,j)\in\mathcal N_1^2\cup\mathcal N_2^2,\\
    &\kappa_{ij}\in[-1,-\delta],~\forall~(i,j)\in\mathcal N_1\times\mathcal N_2\cup\mathcal N_2\times\mathcal N_1\Big\}.
\end{aligned}
\end{align}
Here, $c$ determines the phase distribution range within each group, while $\delta$ regulates the strength and polarity of interactions. Inherently, any network determined by $K$ in the set $\mathcal A_{c,\delta}$ is structural balanced. In the following lemma, we provide a sufficient conditions for constants $c,\delta$ to satisfy for $A_{c,\delta}$ to be an invariant set.
\begin{figure}
    \centering
    \begin{overpic}
        [width=\textwidth]{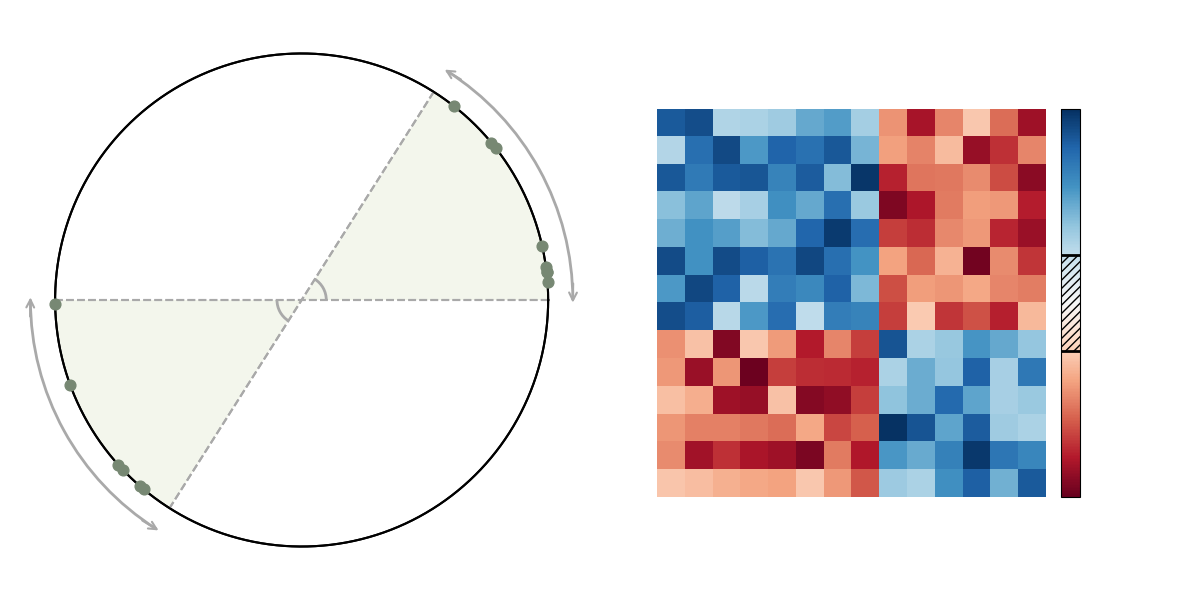}
        \put(38,45){$c$}
        \put(49,24.5){$0$}
        \put(0,24.5){$\pi$}
        \put(9,3.5){$\pi+c$}
        \put(45.3,36.8){$\mathcal N_1$}
        \put(2.6,12.1){$\mathcal N_2$}
        \put(90.5,28){$\delta$}
        \put(90.5,40){$1$}
        \put(90.5,7.9){$-1$}
        \put(90.5,20){$-\delta$}
        \put(93,29){
            \begin{tikzpicture}
              \draw[<->, thick] (0,0) -- (0,1.8);
            \end{tikzpicture}
          }
        \put(93,8.6){
            \begin{tikzpicture}
              \draw[<->, thick] (0,0) -- (0,1.8);
            \end{tikzpicture}
          }
        \put(54,42){
            \begin{tikzpicture}
              \draw [decorate, decoration={brace, amplitude=6pt}]
                (0,0) -- (2.8,0)
                node[midway, yshift=10pt] {$\mathcal N_1$};
            \end{tikzpicture}
          }
        \put(73,42){
            \begin{tikzpicture}
              \draw [decorate, decoration={brace, amplitude=6pt}]
                (0,0) -- (2,0)
                node[midway, yshift=10pt] {$\mathcal N_2$};
            \end{tikzpicture}
          }
        \put(15,30){No enter}
        \put(91,24){No enter}
        \put(95,34){Exist}
        \put(95,14){Exist}
        \put(35,30){Exist}
        \put(12,18){Exist}
        \put(29,18){No enter}
    \end{overpic}
    \caption{Schematic illustration of the invariant set $\mathcal A_{c,\delta}$.
    (Left) Phase configuration associated with a partition $\{\mathcal N_1,\mathcal N_2\}$, where oscillators in $\mathcal N_1$ (resp. $\mathcal N_2$) have phases confined to $[0,c]$ (resp. $[\pi,\pi+c]$).
    (Right) Corresponding coupling matrix $K$, where intra-group couplings satisfy $\kappa_{ij}\in[\delta,1]$ and inter-group couplings satisfy $\kappa_{ij}\in[-1,-\delta]$.}
    \label{fig_setA}
\end{figure}
\begin{lemma}\label{Lem_inv}
    For a hyper-parameter $\beta\in(-\pi,0)$ in \eqref{KM_0}, let a constant $c\in[0,\pi/2)$ to satisfy
    \begin{align}\label{delta_star}
        \delta^*(\beta,c):=-\max\Big(\sin(\beta-c),\sin(\beta+c)\Big)>0
    \end{align}
    and choose $\delta\in[0,1)$ such that $\delta<\delta^*(\beta,c)$. 
    % \begin{align}\label{inv_cond}
    % \begin{aligned}
    %     &\mbox{if}~~\beta\in(-\pi,-\pi/2),\quad\sin(\beta-c)+\delta<0,\\
    %     &\mbox{if}~~\beta\in[-\pi/2,0),\quad\sin(\beta+c)+\delta<0.
    % \end{aligned}
    % \end{align}
    Then, the set $\mathcal A_{c,\delta}$ defined in \eqref{inv_set} is an invariant set for the adaptive network dynamics system \eqref{KM_0}. Moreover, the set $\mathcal A_{c,\delta^*}$
    % for
    % \begin{align}
    %     \delta^*(\beta,c):=-\max\Big(\sin(\beta-c),\sin(\beta+c)\Big)\ge0
    % \end{align}
    is the attractor, i.e., the phase $(\Theta(t),K(t))$ approaches or belongs to $\mathcal A_{c,\delta^*}$ as $t\to\infty$.
\end{lemma}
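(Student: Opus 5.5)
The plan is to verify invariance coordinate by coordinate, using the fact that $\mathcal A_{c,\delta}$ is a product of intervals once the partition $\{\mathcal N_1,\mathcal N_2\}$ is fixed. Specifically, at every boundary face the vector field of \eqref{KM_0} should point weakly inward, and strictly inward wherever possible. Throughout, I fix a trajectory starting in $\mathcal A_{c,\delta}$ with partition $\{\mathcal N_1,\mathcal N_2\}$, and I argue that as long as it stays in the set, no variable can leave its interval. A first-exit-time argument then gives invariance.

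\textbf{Step 1 (coupling faces).} First I would show that on the interval $[\beta-c,\beta+c]$ one has $\sin<0$, with maximum attained at an endpoint. The reasoning is as follows. Since $\delta^*>0$, both endpoint values are negative. The interval has length $2c<\pi$ and lies in $(-3\pi/2,\pi/2)$, so it must lie inside $(-\pi,0)$. On $(-\pi,0)$ the sine function is convex, so its maximum is at an endpoint. Hence
\[
\sin(\theta_i-\theta_j+\beta)\le-\delta^*
\]
whenever $|\theta_i-\theta_j|\le c$, that is, for intra-group pairs. For inter-group pairs, $\theta_i-\theta_j=\pm\pi+d$ with $|d|\le c$, so
\[
\sin(\theta_i-\theta_j+\beta)=-\sin(d+\beta)\ge\delta^*.
\]
From these bounds the boundary behaviour follows directly.
\begin{itemize}
\item At an intra-group face $\kappa_{ij}=\delta$: $\dot\kappa_{ij}\ge\varepsilon(\delta^*-\delta)>0$.
\item At $\kappa_{ij}=1$: $\dot\kappa_{ij}=-\varepsilon(\sin(\cdot)+1)\le0$.
\end{itemize}
The inter-group faces $\kappa_{ij}=-\delta$ and $\kappa_{ij}=-1$ are handled symmetrically.

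\textbf{Step 2 (phase faces).} Next I would fold the circle by $\pi$. Define $\tilde\theta_i=\theta_i$ for $i\in\mathcal N_1$ and $\tilde\theta_i=\theta_i-\pi$ for $i\in\mathcal N_2$, so that all $\tilde\theta_i\in[0,c]$. The right-hand side can then be rewritten as
\[
-\frac1N\sum_j|\kappa_{ij}|\sin(\tilde\theta_i-\tilde\theta_j+\alpha).
\]
This uses that for inter-group pairs the sign flip of $\kappa_{ij}$ cancels the $\pi$ shift. The argument then splits into the two ends of the range.
\begin{itemize}
\item Take $i$ attaining $\min_k\tilde\theta_k$. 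For $\alpha=0$, every term satisfies $\tilde\theta_i-\tilde\theta_j\in[-c,0]$, so each sine is $\le0$ and hence $\dot{\tilde\theta}_i\ge0$.
\item Likewise, the maximal $\tilde\theta_i$ has $\dot{\tilde\theta}_i\le0$.
\end{itemize}
Therefore $\min\tilde\theta$ is nondecreasing and $\max\tilde\theta$ is nonincreasing. I would make this rigorous using upper Dini derivatives of the min and max, and conclude that the $\tilde\theta_k$ never leave $[0,c]$. Combined with Step 1, this gives invariance.

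\textbf{Step 3 (attraction).} Finally, for the attractor claim I would note the following. For intra-group pairs, once the trajectory is in $\mathcal A_{c,\delta}$, Step 1 gives
\[
\dot\kappa_{ij}\ge\varepsilon(\delta^*-\kappa_{ij}).
\]
By comparison, $\kappa_{ij}(t)\ge\delta^*-(\delta^*-\kappa_{ij}(0))e^{-\varepsilon t}$, so $\liminf_{t\to\infty}\kappa_{ij}\ge\delta^*$. The upper bound $1$ is preserved. The analogous bound holds for the inter-group couplings. Since the phases stay in the same boxes, the distance from $(\Theta(t),K(t))$ to $\mathcal A_{c,\delta^*}$ decays like $e^{-\varepsilon t}$. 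If the trajectory starts inside $\mathcal A_{c,\delta^*}$, it remains there, by the nonstrict version of Step 1 at $\delta=\delta^*$.

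\textbf{Main obstacle.} I expect the main difficulty to lie in Step 2 when $\alpha\neq0$. Then $\sin(\tilde\theta_i-\tilde\theta_j+\alpha)$ need not have a definite sign at the extremal oscillator, and the term $\kappa_{ii}\sin\alpha$ acts as a drift. My first attempt would be to pass to a co-rotating frame, noting that only the width $\max\tilde\theta-\min\tilde\theta\le c$ matters up to a global rotation. I would then interpret $[0,c]$ modulo this rotation and compare $\dot{\tilde\theta}_{\max}-\dot{\tilde\theta}_{\min}$. For small $|\alpha|$ this difference has a favourable sign because the positive weights $|\kappa_{ij}|\ge\delta$ enter both terms. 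If that fails, I would restrict the statement to $\alpha=0$ (or to $|\alpha|$ small relative to $c$), which is the regime the above argument fully covers.
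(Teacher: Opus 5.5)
Your argument is correct for $\alpha=0$ and is essentially the paper's proof. Both check that the vector field points (weakly) inward on each face of $\mathcal A_{c,\delta}$, then get attraction to $\mathcal A_{c,\delta^*}$ from the linear comparison $\dot\kappa_{ij}\ge\varepsilon(\delta^*-\kappa_{ij})$. Your convexity-of-$\sin$-on-$(-\pi,0)$ observation is a tidier replacement for the paper's case split on $\beta$ and on the sign of $\theta_i-\theta_j$, and your fold-plus-Dini-derivative handling of the phase faces is a slightly more careful version of the paper's direct check at $\theta_i=0$ and $\theta_i=c$.

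On $\alpha$: the paper's proof also silently sets $\alpha=0$ (it writes $\dot\theta_i=\frac1N\sum_j\kappa_{ij}\sin(\theta_j-\theta_i)$), and restricting to $\alpha=0$ is necessary, so your small-$|\alpha|$ alternative would fail. For $\sin\alpha>0$, take $\mathcal N_2=\varnothing$, $\delta=0$, $\theta_M=c$ with all other phases $0$, $\kappa_{Mj}=\kappa_{mM}=0$ and $\kappa_{mj}=1$ for $j\neq M$. This gives $\dot\theta_M-\dot\theta_m=\tfrac{N-1}{N}\sin\alpha>0$, and mirroring the construction handles $\sin\alpha<0$. So the set is not invariant even modulo a global rotation, and no co-rotating-frame argument can rescue the case $\alpha\neq 0$.
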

\begin{proof}
    It is enough to show that configurations are pushed into the set $\mathcal A_{c,\delta}$ at each boundary.

    \vspace{.2cm}
    
    \noindent$\bullet$ {\bf(Boundary at $\theta=0$)} Assume that a phase $\theta_i$ for some $i\in\mathcal N_1$ touches $0$ at time $t$. Then, we have
    \begin{align*}
        &\theta_j-\theta_i\in[0,c],~~\kappa_{ij}\ge\delta>0,\quad\forall~j\in\mathcal N_1,\\
        &\theta_j-\theta_i\in[\pi,\pi+c],~~\kappa_{ij}\le-\delta<0,\quad\forall~j\in\mathcal N_2.
    \end{align*}
    Therefore, one can deduce
    \begin{align*}
        \dot\theta_i&=\frac{1}{N}\left(\sum_{j\in\mathcal N_1}\kappa_{ij}\sin(\theta_j-\theta_i)+\sum_{j\in\mathcal N_2}\kappa_{ij}\sin(\theta_j-\theta_i)\right)\ge0.
    \end{align*}

    \vspace{.2cm}

    \noindent$\bullet$ {\bf (Boundary at $\theta=c$)} Similarly, we have
    \begin{align*}
        &\theta_j-\theta_i\in[-c,0],~~\kappa_{ij}\ge\delta>0,\quad\forall~j\in\mathcal N_1,\\
        &\theta_j-\theta_i\in[\pi-c,\pi],~~\kappa_{ij}\le-\delta<0,\quad\forall~j\in\mathcal N_2.
    \end{align*}
    Then, the decrease of $\theta_i$ is guaranteed, i.e., $\dot\theta_i\le0$.

    \vspace{.2cm}

    \noindent Likewise, we can show the invariant property of $\mathcal A_{c,\delta}$ at the boundary $\theta=\pi$ and $\theta=\pi+c$.

    \vspace{.2cm}

    \noindent$\bullet$ {\bf (Boundary at $\kappa=\delta$)} We assume $\kappa_{ij}=\delta$ at time $t$ for some $(i,j)\in\mathcal N_1\times\mathcal N_1\cup\mathcal N_2\times\mathcal N_2$. At time $t$, the variable $\kappa_{ij}$ follows
    \begin{align*}
        \dot\kappa_{ij}&=-\varepsilon\cdot\big(\sin(\theta_i-\theta_j+\beta)+\delta\big)\\
        &=-\varepsilon\cdot\Big(\sin(\theta_i-\theta_j)\cos\beta+\cos(\theta_i-\theta_j)\sin\beta+\delta\Big).
    \end{align*}
    Note that $\theta_i-\theta_j\in[-c,c]$ and
    \begin{align}\label{abs_inf}
        0\le\big|\sin(\theta_i-\theta_j)\cos\beta\big|\le \big|\sin c\cos\beta\big|,\quad\big|\cos c\sin\beta\big|\le\big|\cos(\theta_i-\theta_j)\sin\beta\big|\le\big|\sin\beta\big|.
    \end{align}
    Now, we split the cases of $\beta\in(-\pi,-\pi/2)$ and $\beta\in[-\pi/2,0)$.

    \vspace{.1cm}
    
    \noindent - \underline{Case 1} ($\beta\in(-\pi,-\pi/2)$): Here, we have 
    \[\sin\beta<0,~~\cos\beta<0.\]
    If $\theta_i-\theta_j\in[-c,0]\subset(-\pi/2,0)$, one has
    \begin{align*}
        \dot\kappa_{ij}&=-\varepsilon\cdot\Big(\big|\sin(\theta_i-\theta_j)\cos\beta\big|-\big|\cos(\theta_i-\theta_j)\sin\beta\big|+\delta\Big)\\
        &\ge-\varepsilon\cdot\Big(-\sin c\cos\beta+\cos c\sin\beta+\delta\Big)=-\varepsilon\cdot\Big(\sin(\beta-c)+\delta\Big)>0,
    \end{align*}
    where we use the relation \eqref{abs_inf} and condition $\delta<\delta^*$ in two inequalities, respectively. Likewise, if $\theta_i-\theta_j\in(0,c]\subset(0,\pi/2)$, we can derive
    \begin{align*}
        \dot\kappa_{ij}&=-\varepsilon\cdot\Big(-\big|\sin(\theta_i-\theta_j)\cos\beta\big|-\big|\cos(\theta_i-\theta_j)\sin\beta\big|+\delta\Big)\\
        &\ge-\varepsilon\cdot\Big(\cos c\sin\beta+\delta\Big)>0,
    \end{align*}
    where we use
    \begin{align*}%\label{sincos}
        0>\sin(\beta-c)+\delta\ge\sin\beta\cos c+\delta
    \end{align*}
    in the last inequality. 
    
    \vspace{.1cm}

    \noindent - \underline{Case 2} ($\beta\in[-\pi/2,0)$): If $\theta_i-\theta_j\in[-c,0]$, we use the relation \eqref{abs_inf} and
    \[0>\sin(\beta+c)+\delta\ge\sin\beta\cos c+\delta\]
    to obtain
    \begin{align*}
        \dot\kappa_{ij}&=-\varepsilon\cdot\Big(-\big|\sin(\theta_i-\theta_j)\cos\beta\big|-\big|\cos(\theta_i-\theta_j)\sin\beta\big|+\delta\Big)\\
        &\ge-\varepsilon\cdot\Big(\cos c\sin\beta+\delta\Big)>0.
    \end{align*}
    Similarly, for $\theta_i-\theta_j\in(0,c]$, we get
    \begin{align*}
        \dot\kappa_{ij}&=-\varepsilon\cdot\Big(\big|\sin(\theta_i-\theta_j)\cos\beta\big|-\big|\cos(\theta_i-\theta_j)\sin\beta\big|+\delta\Big)\\
        &\ge-\varepsilon\cdot\Big(\sin c\cos\beta+\cos c\sin\beta+\delta\Big)=-\varepsilon\cdot\Big(\sin(\beta+c)+\delta\Big)>0.
    \end{align*}

    \vspace{.1cm}

    \noindent From two cases, we conclude that
    \[\dot\kappa_{ij}(t)\ge0\quad\mbox{when}\quad\kappa_{ij}(t)=\delta.\]

    \vspace{.2cm}

    \noindent We can also use the similar argument when $\kappa_{ij}=-\delta$, which leads to $\dot\kappa_{ij}\le0$.

    \vspace{.2cm}

    Now, we are left to show the attracting set $\mathcal A_{c,\delta^*}$. Note that
    \begin{align*}
        \theta_j-\theta_i-\beta\in\big[-c-\beta,c-\beta\big],\quad&\mbox{if}~~(i,j)\in\mathcal N^1\cup\mathcal N^2,\\
        \theta_j-\theta_i-\beta\in[\pi+c-\beta,\pi-c-\beta],&\mbox{otherwise}.
    \end{align*}
    This and the condition $\delta<\delta^*$ imply that the sine term satisfies
    \begin{align*}
        \sin(\theta_j-\theta_i-\beta)\in[\min\big(\sin(-c-\beta),\sin(c-\beta)\big),1],\quad&\mbox{if}~~(i,j)\in\mathcal N^1\cup\mathcal N^2,\\
        \sin(\theta_j-\theta_i-\beta)\in\big[-1,\min\big(\sin(-c-\beta),\sin(c-\beta)\big)\big],&\mbox{otherwise}.
    \end{align*}
    which with the adaptive rule 
    \begin{align*}
        \dot\kappa_{ij}=\varepsilon\Big(\sin(\theta_j-\theta_i-\beta)-\kappa_{ij}\Big)
    \end{align*}
    shows the attracting properties of $\mathcal A_{c,\delta^*}$. This ends the proof.
\end{proof}

Lemma \ref{Lem_inv} guarantees the boundedness of coupling strengths $\kappa_{ij}$, which are suitable for the purpose of bipartite synchronization, without any change of signs in coupling strengths. More precisely, the couplings within each groups, defined by $\mathcal N_1$ and $\mathcal N_2$, lead to aggregation among agents within the same group, while the couplings between groups result in repulsion between agents on $\mathbb T^1$. 

From now on, we consider dynamics under the Lemma \ref{Lem_inv} regime. That is, without loss of generality, we assume that an initial data $(\Theta^0,K^0)$ satisfy
\begin{align}\label{th_rng}
    \theta_i^0\in[0,c],\quad\forall~i\in\mathcal N_1,\quad\theta_i^0\in[\pi,\pi+c],\quad\forall~i\in\mathcal N_2,
\end{align}
and
\begin{align}\label{k_rng}
    \begin{aligned}
        &\kappa_{ij}^0\in[\delta,1],\quad\forall~(i,j)\in\mathcal N_1\times\mathcal N_1\cup\mathcal N_2\times\mathcal N_2,\\
        &\kappa_{ij}^0\in[-1,-\delta],\quad\forall~(i,j)\in\mathcal N_1\times\mathcal N_2\cup\mathcal N_2\times\mathcal N_1.
    \end{aligned}
\end{align}
Above relations \eqref{th_rng} and \eqref{k_rng} hold for all $t\ge0$ by Lemma \ref{Lem_inv}. Based on this bounded relations, we may rewrite the system $(\Theta,K)\leftrightarrow(\tilde\Theta,\tilde K)$ as
\begin{align*}
    \tilde\theta_i=
    \begin{dcases}
        \theta_i,\quad\forall~i\in\mathcal N_1,\\
        \theta_i-\pi,\quad\forall~i\in\mathcal N_2,
    \end{dcases}
    \quad\tilde\kappa_{ij}=
    \begin{dcases}
        \kappa_{ij},\quad\forall~(i,j)\in\mathcal N_1^2\cup\mathcal N_2^2,\\
        -\kappa_{ij},\quad\forall~(i,j)\in\mathcal N_1\times\mathcal N_2\cup\mathcal N_2\times\mathcal N_1.
    \end{dcases}
\end{align*}
One can easily check that $\tilde\theta_i\in[0,c]$ and $\kappa_{ij}\in[\delta,1]$ for any $i,j\in[N]$ and
\begin{align*}
    \begin{aligned}
        \begin{dcases}
            \frac{d}{dt}\tilde\theta_i=\frac{1}{N}\sum_{j=1}^N\tilde\kappa_{ij}\sin(\tilde\theta_j-\tilde\theta_i),\\
            \frac{d}{dt}\tilde\kappa_{ij}=-\varepsilon\big(\sin(\tilde\theta_i-\tilde\theta_j+\beta)+\tilde\kappa_{ij}\big),
        \end{dcases}
    \end{aligned}
\end{align*}
which is the same structure as the original model \eqref{KM_0}. Thus, showing the complete synchronization of $(\tilde\Theta,\tilde K)$ is equivalent to showing the bipartite synchronization of $(\Theta,K)$ appearing under the Lemma \ref{Lem_inv} regime. This leads to consider the initial data in $\mathcal A_{c,\delta}$ with a trivial partition, i.e., $\mathcal N_1=[N]$ and $\mathcal N_2=\varnothing$. Here, we set several notation
\begin{align*}
    \theta_M(t):=\max_{i\in[N]}\theta_i(t),\quad\theta_m(t):=\min_{i\in[N]}\theta_i(t),\quad\mathcal D(t):=\theta_M(t)-\theta_m(t),
\end{align*}
of the maximum and minimum phase and phase diameter.
\begin{theorem}\label{Thm_bi_sync}
    Suppose that an initial data $(\Theta^0,K^0)$ satisfy
    \begin{align}\label{thm1_cond}
        \mathcal D^0<\min(\pi+\beta,|\beta|)<\frac{\pi}{2}\quad\mbox{and}\quad\min_{i,j\in[N]}\kappa_{ij}^0>0.
    \end{align}
    Then, the solution $(\Theta,K)$ to \eqref{KM_0} satisfy
    \[\lim_{t\to\infty}\mathcal D(t)=0\quad\mbox{and}\quad\lim_{t\to\infty}\kappa_{ij}(t)=-\sin\beta,\quad\forall~i,j\in[N].\]
\end{theorem}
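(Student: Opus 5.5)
Throughout I read the model in the form used after the change of variables: $\dot\theta_i=\frac1N\sum_j\kappa_{ij}\sin(\theta_j-\theta_i)$, i.e.\ $\alpha=0$, with $\beta\in(-\pi,0)$, so $\sin\beta<0$ and $-\sin\beta>0$. The plan is a bootstrap that couples a diameter estimate with positivity of the couplings.

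\textbf{Set-up and positivity of the couplings.} Put $c:=\mathcal D^0$ and translate phases so that $\Theta^0\subset[0,c]$. The hypothesis $c<\min(\pi+\beta,|\beta|)$ gives $\beta\pm c\in(-\pi,0)$. Hence $\sin(\beta\pm c)<0$, and $\delta^*(\beta,c)$ in \eqref{delta_star} is positive.

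Choose $\delta:=\min\bigl(\min_{i,j}\kappa^0_{ij},\,\delta^*/2\bigr)>0$. Then $(\Theta^0,K^0)\in\mathcal A_{c,\delta}$ with the trivial partition $\mathcal N_1=[N]$, $\mathcal N_2=\varnothing$.

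Lemma \ref{Lem_inv} then gives, for all $t\ge0$, $\theta_i(t)\in[0,c]$ and $\kappa_{ij}(t)\in[\delta,1]$. One detail to check is that a coupling with $\kappa^0_{ij}>1$ is not excluded by the hypothesis. In that case the upper bound $1$ has to be replaced by $\max(1,\max\kappa^0_{ij})$. The boundary argument at the top face is unchanged, since $-\varepsilon(\sin(\cdot)+\kappa)<0$ whenever $\kappa\ge1$.

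\textbf{Contraction of the diameter.} With all couplings in $[\delta,\bar\kappa]$ and all phase differences in $[-c,c]\subset(-\pi/2,\pi/2)$, compute $\dot{\mathcal D}$ at the extremal indices $M,m$, using Dini derivatives. Each term $\kappa_{Mj}\sin(\theta_j-\theta_M)$ is $\le0$ and each term $\kappa_{mj}\sin(\theta_j-\theta_m)$ is $\ge0$, so
\[
\dot{\mathcal D}\le -\frac{\delta}{N}\sum_j\Bigl(\sin(\theta_M-\theta_j)+\sin(\theta_j-\theta_m)\Bigr).
\]
Apply the sum-to-product identity, $\sin x+\sin y=2\sin\frac{x+y}2\cos\frac{x-y}2$, with $x+y=\mathcal D$ and $|x-y|\le\mathcal D$. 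Each summand is then at least $2\sin\frac{\mathcal D}{2}\cos\frac{\mathcal D}{2}=\sin\mathcal D$. This yields
\[
\dot{\mathcal D}\le-\delta\sin\mathcal D\le-\frac{2\delta}{\pi}\mathcal D,
\]
since $\mathcal D\le c<\pi/2$. Hence $\mathcal D(t)\le\mathcal D^0e^{-2\delta t/\pi}\to0$.

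\textbf{Convergence of the couplings.} Each $\kappa_{ij}$ solves a linear ODE $\dot\kappa_{ij}=-\varepsilon(\kappa_{ij}+f_{ij}(t))$ with forcing $f_{ij}(t)=\sin(\theta_i-\theta_j+\beta)$. Since $|\theta_i-\theta_j|\le\mathcal D(t)\to0$ exponentially, $f_{ij}(t)\to\sin\beta$. Variation of constants then gives
\[
\kappa_{ij}(t)+\sin\beta=e^{-\varepsilon t}\bigl(\kappa_{ij}^0+\sin\beta\bigr)-\varepsilon\int_0^te^{-\varepsilon(t-s)}\bigl(f_{ij}(s)-\sin\beta\bigr)\,ds\to0,
\]
so $\kappa_{ij}(t)\to-\sin\beta$ for all $i,j$.

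\textbf{Main obstacle.} The delicate step is the invariance: the lower bound $\kappa\ge\delta$ must be maintained while the phases stay in $[0,c]$. The boundary computations in Lemma \ref{Lem_inv} need exactly $\sin(\beta\pm c)<0$, and this is why the hypothesis is $\mathcal D^0<\min(\pi+\beta,|\beta|)$. I would verify that reduction carefully. The diameter estimate itself is routine once the positive lower bound on the couplings is secured.
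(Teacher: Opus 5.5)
Your proposal is correct and follows essentially the paper's route: invariance of $\mathcal A_{c,\delta}$ via Lemma \ref{Lem_inv} with $c=\mathcal D^0$ and $0<\delta<\min(\delta^*,\min_{i,j}\kappa^0_{ij})$, then a sum-to-product contraction estimate for $\mathcal D$, then convergence of each $\kappa_{ij}$ as a linearly damped variable whose forcing tends to $\sin\beta$. The differences are cosmetic (Jordan's inequality instead of integrating with $f(x)=\csc x-\cot x$, variation of constants instead of the $\zeta$-sandwich), and you are slightly more careful than the paper in using $\delta$ rather than $\delta^*$ as the coupling lower bound and in allowing $\kappa^0_{ij}>1$; note that at $\kappa=1$ you only get $\dot\kappa\le0$ rather than $<0$, which is all you need.
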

\begin{proof}
    First, we consider the parameter $\delta $ in Lemma \ref{Lem_inv} with $c=\mathcal D^0$. Note that the condition \eqref{thm1_cond} implies that $\delta^*(\beta,\mathcal D^0)<0$.
    % \begin{align*}
    %     \begin{aligned}
    %         &\mbox{if}~~\beta\in(-\pi,-\pi/2),\quad&&-1<\sin(\beta-\mathcal D^0)<0,\\
    %         &\mbox{if}~~\beta\in[-\pi/2,0),\quad&&-1<\sin(\beta+\mathcal D^0)<0.
    %     \end{aligned}
    % \end{align*}
    So, the existence of $\delta\in[0,1)$ is guaranteed as
    \begin{align*}
        \delta<\min\left(\delta^*(\beta,\mathcal D^0),\min_{i,j\in[N]}\kappa_{ij}^0\right).
    \end{align*}
    Hence, by Lemma \ref{Lem_inv}, we have
    \begin{align}\label{DK_range}
        \begin{aligned}
            0\le\mathcal D(t)\le\mathcal D^0<\frac{\pi}{2},\quad\delta^*(\beta,\mathcal D^0)\le\kappa_{ij}(t)\le1,\quad\forall~i,j\in[N],~~t\ge0.
        \end{aligned}
    \end{align}
    From \eqref{KM_0}, one can compute
    \begin{align}\label{dt_diam}
    \begin{aligned}
        \frac{d}{dt}\mathcal D(t)&=\frac{1}{N}\sum_{j=1}^N\Big(\kappa_{Mj}\sin(\theta_j-\theta_M)-\kappa_{mj}\sin(\theta_j-\theta_m)\Big)\\
        &\le\frac{2\delta}{N}\sum_{j=1}^N\cos\left(\frac{2\theta_j-\theta_M-\theta_m}{2}\right)\sin\left(\frac{\theta_m-\theta_M}{2}\right)\\
        &\le-2\delta\cos\mathcal D^0\sin\left(\frac{\mathcal D(t)}{2}\right),
    \end{aligned}    
    \end{align}
    where first and second inequalities comes from the relations \eqref{DK_range}. By direct calculations, we derive that the solution satisfies
    \begin{align*}
        f\left(\frac{\mathcal D(t)}{2}\right)\le f\left(\frac{\mathcal D^0}{2}\right)e^{-\delta^*(\beta,\mathcal D^0)t\cos \mathcal D^0},
    \end{align*}
    where the real-valued function $f$ is defined by
    \begin{align}\label{def_f}
        f(x):=\csc(x)-\cot(x).
    \end{align}
    Since the function $f$ is monotone increasing on $(0,\pi)$ and $\lim_{x\to0+}f(x)=0$, we derive
    \begin{align}\label{diam_conv}
        \lim_{t\to\infty}\mathcal D(t)=0.
    \end{align}
    Now, it is enough to show the convergence of coupling strengths. The convergence of diameter \eqref{diam_conv} means for any $0<\zeta$, there exists $t^*(\zeta)>0$ such that
    \[|\theta_i(t)-\theta_j(t)|<\zeta,\quad\forall~t\ge t^*(\zeta),~~\forall~i,j\in[N].\]
    Here, we let 
    \begin{align*}
        \zeta=\frac{1}{2}\min(\pi+\beta,-\beta),
    \end{align*}
    which implies the following relation
    \begin{align}\label{kappa_sndw}
        \varepsilon m_{\beta,\zeta}\le\dot\kappa_{ij}(t)+\varepsilon\kappa_{ij}(t)\le \varepsilon M_{\beta,\zeta},\quad\forall~t\ge t^*(\zeta),
    \end{align}
    where the constants are defined as
    \begin{align*}
        M_{\beta,\zeta}&:=\max\Big(\sin(-\zeta-\beta),\sin(\zeta-\beta),\sin(-\beta)\Big),\\
        m_{\beta,\zeta}&:=\min\Big(\sin(-\zeta-\beta),\sin(\zeta-\beta),\sin(-\beta)\Big).
    \end{align*}
    From \eqref{kappa_sndw}, one obtains
    \begin{align*}
        m_{\beta,\zeta}+\left(\kappa_{ij}(t^*(\zeta))-m_{\beta,\zeta}\right)e^{-\varepsilon(t-t^*(\zeta))}\le\kappa_{ij}(t)\le M_{\beta,\zeta}+\left(\kappa_{ij}(t^*(\zeta))-M_{\beta,\zeta}\right)e^{-\varepsilon(t-t^*(\zeta))},
    \end{align*}
    hence,
    \begin{align*}
        m_{\beta,\zeta}\le\lim_{t\to\infty}\kappa_{ij}(t)\le M_{\beta,\zeta}.
    \end{align*}
    Here, we use the continuity of $M_{\beta,\zeta}$ and $m_{\beta,\zeta}$, i.e.,
    \[\lim_{\zeta\to 0}M_{\beta,\zeta}=\lim_{\zeta\to0}m_{\beta,\zeta}=-\sin\beta\]
    to complete the proof.
\end{proof}
\begin{figure}
    \centering
    \begin{subfigure}{0.45\textwidth}
        \centering
        \begin{overpic}[width=\textwidth]{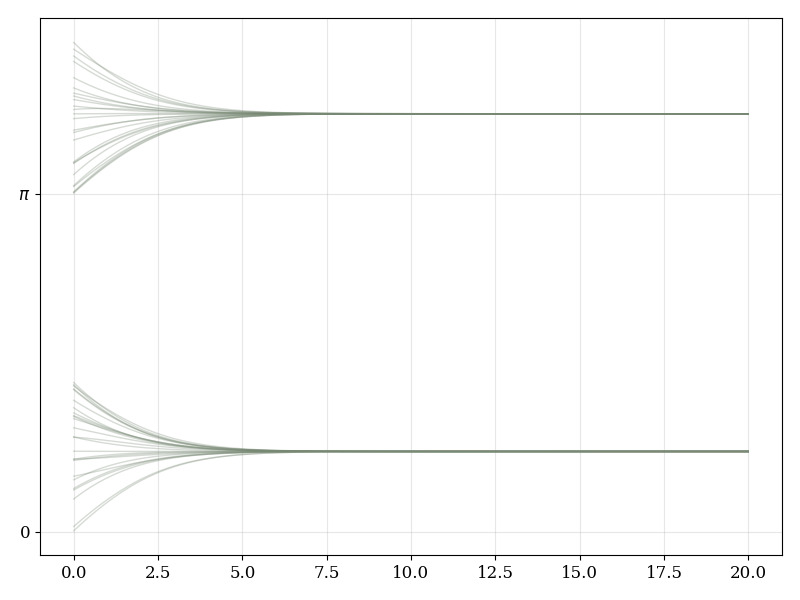}
            \put(-2,38){\small$\theta_i$}
            \put(44,-2){\small time $t$}
        \end{overpic}
        \caption{Evolution of phases}
    \end{subfigure}
    \hspace{.5cm}
    \begin{subfigure}{0.45\textwidth}
        \centering
        \begin{overpic}[width=\textwidth]{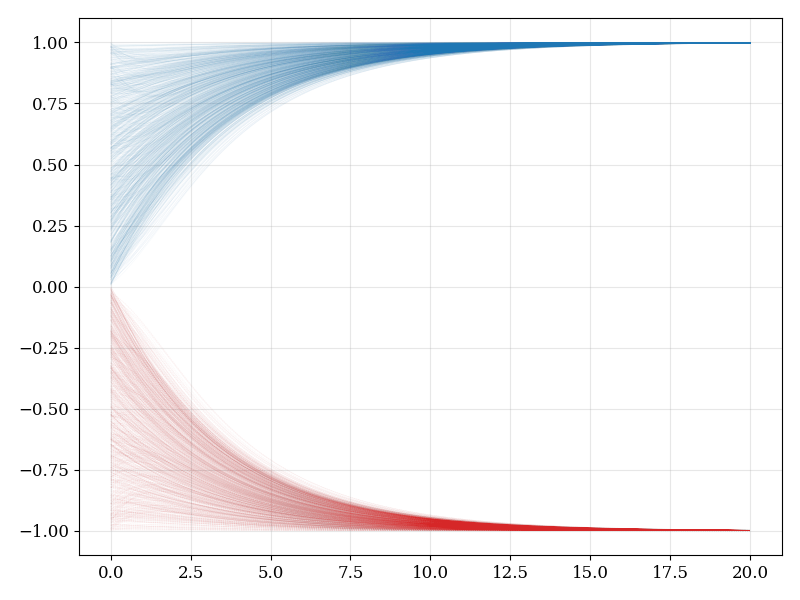}
            \put(47,-2){\small time $t$}
            \put(-3,38){\small$\kappa_{ij}$}
        \end{overpic}
        \caption{Evolution of coupling strengths}
    \end{subfigure}
    \caption{Numeric example with an initial data satisfying the conditions in Theorem \ref{Thm_bi_sync} ($\beta=-\pi/2)$.}
\end{figure}
% For $f(x)=\csc(x)-\cot(x)$, we define $A:(-\pi,0)\times(0,\infty)\times[0,1]$ as follows: if $\beta\in(-\pi,-\pi/2)$,
% \begin{align*}
%     A(\beta,\varepsilon,\kappa_{\min}^0):=\max_{0\le D<\pi+\beta}f(D/2)\exp\left(-\frac{\sin(\beta-D)}{\varepsilon}\ln\left(\frac{\sin(\beta-D)+\kappa_{\min}^0}{\sin(\beta-D)}\right)+\frac{\kappa_{\min}^0}{\varepsilon}\right),
% \end{align*}
% and if $\beta\in(-\pi/2,0)$,
% \begin{align*}
%     A(\beta,\varepsilon,\kappa_{\min}^0):=\max_{0\le D<-\beta}f(D/2)\exp\left(-\frac{\sin(\beta+D)}{\varepsilon}\ln\left(\frac{\sin(\beta+D)+\kappa_{\min}^0}{\sin(\beta+D)}\right)+\frac{\kappa_{\min}^0}{\varepsilon}\right).
% \end{align*}
We now consider a more general setting in which the initial coupling strengths
are not fully separated by sign. Our goal is to understand whether the adaptive
dynamics of the couplings, induced by $\varepsilon>0$, can still guarantee
synchronization within each group. To simplify notation, we define
\[\kappa_{\min}^0 := \min_{i,j\in[N]} \kappa^0_{ij}.\]
\begin{theorem}\label{thm_adap}
    Consider initial data $(\Theta^0,K^0)$ with $\kappa_{\min}^0<0$ and $\mathcal D^0\le\overline{\mathcal D}$ where the constant $\overline{\mathcal D}=\overline{\mathcal D}\left(\beta,\varepsilon,\kappa_{\min}^0\right)$ is defined by
    \begin{align}\label{Dbar}
        \begin{aligned}
            &\overline{\mathcal D}\left(\beta,\varepsilon,\kappa_{\min}^0\right):=\argmax_{0\le\mathcal D\le\min(\pi+\beta,|\beta|)}f\left(\frac{\mathcal D}{2}\right)\exp\left(\frac{\delta^*}{\varepsilon}\ln\left(\frac{\delta^*-\kappa_{\min}^0}{\delta^*}\right)+\frac{\kappa_{\min}^0}{\varepsilon}\right),
        \end{aligned}
    \end{align}
    for the constant $\delta^*=\delta^*(\beta,\mathcal D)$ and the function $f$ are defined in \eqref{delta_star} and \eqref{def_f}, respectively. Then, the solution $(\Theta,K)$ to \eqref{KM_0} satisfies
    \begin{align*}
        \lim_{t\to\infty}\mathcal D(t)=0\quad\mbox{and}\quad\lim_{t\to\infty}\kappa_{ij}(t)=-\sin\beta,\quad\forall~i,j\in[N].
    \end{align*}
\end{theorem}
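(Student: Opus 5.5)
The strategy is to reduce the statement to \Cref{Thm_bi_sync}. We show that the phases cannot spread too much before the couplings become positive. After that time, the hypotheses of \Cref{Thm_bi_sync} hold at a later initial time.

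\textbf{Step 1: lower bound on the couplings.} Write $\delta^*=\delta^*(\beta,\mathcal D)$ for the diameter $\mathcal D$ at which the argmax in \eqref{Dbar} is attained. As long as $\mathcal D(t)\le \mathcal D$, every pair satisfies $|\theta_i-\theta_j|\le\mathcal D$. Then $-\sin(\theta_i-\theta_j+\beta)\ge\delta^*$, exactly as in the boundary computation of \Cref{Lem_inv}. Hence
\[
\dot\kappa_{ij}\ge\varepsilon(\delta^*-\kappa_{ij}),\qquad\text{so}\qquad \kappa_{ij}(t)\ge\delta^*-(\delta^*-\kappa^0_{\min})e^{-\varepsilon t}.
\]
Thus all couplings become nonnegative by the time
\[
T:=\frac1\varepsilon\ln\frac{\delta^*-\kappa^0_{\min}}{\delta^*}.
\]

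\textbf{Step 2: growth of the diameter while couplings may be negative.} On $[0,T]$ we redo the estimate \eqref{dt_diam}. We split each coupling as $\kappa_{ij}=\kappa_{ij}^+-\kappa_{ij}^-$. The positive parts contract the diameter, as before. The negative parts contribute at most
\[
\frac{2}{N}\sum_j \kappa^-_{ij}\,\sin\!\Big(\tfrac{\mathcal D}{2}\Big)\cos(\cdot)\lesssim 2\max\big(0,-\kappa_{\min}(t)\big)\sin\!\Big(\tfrac{\mathcal D}{2}\Big).
\]
Using the lower bound from Step 1 for $\kappa_{\min}(t)$, this gives
\[
\frac{d}{dt}f\!\Big(\tfrac{\mathcal D}{2}\Big)\le \big(\delta^*-\kappa_{\min}^0\big)e^{-\varepsilon t}-\delta^*\; \text{ times } f\!\Big(\tfrac{\mathcal D}{2}\Big),
\]
since $f'(x)/f(x)=\csc x$ turns $\sin(\mathcal D/2)$-type terms into linear ones. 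We integrate this over $[0,T]$ with Grönwall. The integral $\int_0^T\big((\delta^*-\kappa^0_{\min})e^{-\varepsilon s}-\delta^*\big)\,ds$ equals
\[
\frac{\delta^*}{\varepsilon}\ln\!\Big(\frac{\delta^*-\kappa_{\min}^0}{\delta^*}\Big)+\frac{\kappa^0_{\min}}{\varepsilon}.
\]
This is exactly the exponent in \eqref{Dbar}, up to the sign bookkeeping. Therefore
\[
f\!\Big(\tfrac{\mathcal D(t)}{2}\Big)\le f\!\Big(\tfrac{\mathcal D^0}{2}\Big)\exp(\cdots)\qquad\text{on }[0,T].
\]

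\textbf{Step 3: bootstrap and conclusion.} We need a continuity argument showing that $\mathcal D(t)$ stays below $\min(\pi+\beta,|\beta|)$ on $[0,T]$. This keeps the Step 1 bound valid, and it is where the choice $\mathcal D^0\le\overline{\mathcal D}$ enters. The argmax definition is meant to guarantee that the amplified quantity $f(\mathcal D^0/2)e^{(\cdots)}$ remains below $f$ evaluated at the admissible threshold, and $f$ is monotone. Interpreting \eqref{Dbar} correctly is, I expect, the main obstacle. One must fix a consistent level $\mathcal D$ at which $\delta^*$ is evaluated. One must also check that the a priori bound is self-consistent: $\mathcal D(t)\le\mathcal D$ on $[0,T]$ is needed for Step 1, and Step 1 is needed for the bound on $\mathcal D(t)$. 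I would try a first-exit-time argument: let $t_1$ be the first time $\mathcal D$ reaches the level. Then the Step 2 estimate contradicts $t_1\le T$.

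Once at time $T$ all $\kappa_{ij}(T)\ge0$ and $\mathcal D(T)<\min(\pi+\beta,|\beta|)$, we shift time slightly so that strict positivity holds. This is possible because $\dot\kappa_{ij}>0$ when $\kappa_{ij}=0$. \Cref{Thm_bi_sync} then applies from that time and gives $\mathcal D(t)\to0$ and $\kappa_{ij}(t)\to-\sin\beta$.
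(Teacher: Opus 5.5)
Your proposal follows essentially the same route as the paper's proof: the lower bound $\kappa_{ij}(t)\ge\delta^*-(\delta^*-\kappa^0_{\min})e^{-\varepsilon t}$ while the diameter stays below the chosen level, and the time $\frac{1}{\varepsilon}\ln\frac{\delta^*-\kappa^0_{\min}}{\delta^*}$ after which all couplings are nonnegative. It then uses a Gr\"onwall bound for $f(\mathcal D/2)$, whose total growth factor over that interval is the exponential of minus the exponent in \eqref{Dbar} (your integral has the sign flipped), and an appeal to \Cref{Thm_bi_sync}. Your first-exit-time bootstrap, your reading of \eqref{Dbar} as the constraint $f(\mathcal D^0/2)\le f(\mathcal D/2)\exp(\cdots)$ at a fixed level $\mathcal D$, and the small time shift to get strictly positive couplings are more careful than the paper, which leaves these points implicit and does not actually show that $\mathcal D\le\overline{\mathcal D}$ persists up to the time the couplings become nonnegative.
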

\begin{proof}
    Set 
    \begin{align*}
        \tilde t:=\sup\{t>0~:~\mathcal D(s)\le\overline{\mathcal D},\quad\forall~s\in(0,t)\}.
    \end{align*}
    By the continuity of $\mathcal D(t)$, there exists $\tilde t>0$. On $t\in[0,\tilde t~]$, one has
    \begin{align*}
        \dot\kappa_{ij}&=\varepsilon(\sin(\theta_j-\theta_i-\beta)-\kappa_{ij})\le\varepsilon(\delta^*-\kappa_{ij}),
    \end{align*}
    which gives an increasing lower bound for each network weight, in particular,
    \begin{align*}
        \kappa_{ij}(t)\ge\delta^*+(\kappa_{ij}^0-\delta^*)e^{-\varepsilon t},\quad\forall~t\le\tilde t.
    \end{align*}
    This leads to
    \begin{align*}
        \kappa_{ij}(t)\ge0,\quad\forall~t\ge\frac{1}{\varepsilon}\ln\left(\frac{\delta^*-\kappa_{\min}^0}{\delta^*}\right)=:\tilde T(\varepsilon,\delta^*,\kappa_{\min}^0).
    \end{align*}
    Note that if $\mathcal D(t)\le\overline{\mathcal D}$ until every network weights gets greater or equal to zero, i.e., until time $\tilde T$, then we have the desired result by Theorem \ref{Thm_bi_sync}. Hence, now we examine how much increment in diameter $\mathcal D(t)$ could occur until time $\tilde T$. Note that if $\mathcal D(t)\le\overline{\mathcal D}$, we have
    \begin{align*}
        \frac{d}{dt}\mathcal D(t)&=\frac{2}{N}\Big(\delta^*+\Big(\kappa_{\min}^0-\delta^*\Big)e^{-\varepsilon t}\Big)\sum_{j=1}^N\cos\left(\frac{2\theta_j-\theta_M-\theta_m}{2}\right)\sin\left(\frac{\theta_m-\theta_M}{2}\right)\\
        &\le-2\Big(\delta^*+\Big(\kappa_{\min}^0-\delta^*\Big)e^{-\varepsilon t}\Big)\sin\left(\frac{\mathcal D(t)}{2}\right).
    \end{align*}
    Again using the function $f$ defined in \eqref{def_f}, one gets
    \begin{align*}
        f\left(\frac{\mathcal D(t)}{2}\right)\le f\left(\frac{\mathcal D^0}{2}\right)\exp\left(-\delta^*t+\frac{1}{\varepsilon}\left(\delta^*-\kappa_{\min}^0\right)(1-e^{-\varepsilon t})\right)
    \end{align*}
    if $\mathcal D(s)\le\overline{\mathcal D}$ for each $s\in(0,t)$. Note that the exponent is positive for all $t\in[0,\tilde t~]$, hence the definition of $\overline{\mathcal D}$ \eqref{Dbar} implies that
    \begin{align*}
        \mathcal D(t)\le\overline{\mathcal D},\quad\forall~t\in[0,\tilde t~].
    \end{align*}
    This completes the proof.
\end{proof}
\begin{remark}
    Figures \ref{fig_cond_2d} and \ref{fig_cond_3d} provide a quantitative illustration of the critical diameter $\overline{\mathcal D}$ defined in \Cref{thm_adap}. In particular, they show how the maximal admissible initial phase diameter guaranteeing convergence to the (antipodal) synchronized state depends on the adaptation rate $\varepsilon$, the phase-lag parameter $\beta$, and the minimal initial coupling strength $\kappa_{\min}$.

    \Cref{fig_cond_2d} visualizes the critical diameter $\overline{\mathcal D}$ over the $(\varepsilon,\kappa_{\min}^0)$-plane for several values of the phase-lag parameter $\beta$. Across all panels, $\overline{\mathcal D}$ is close to zero for $\varepsilon\approx 0$, indicating that without coupling adaptivity there is essentially no nontrivial guarantee of convergence for spread initial phases. As $\varepsilon$ increases, $\overline{\mathcal D}$ increases, showing that faster adaptation enlarges the admissible set of initial phase configurations. Moreover, for fixed $\varepsilon$, the critical diameter decreases as $\kappa_{\min}^0$ becomes more negative, i.e., stronger initial inhibitory couplings reduce the guaranteed basin of attraction. Comparing the three panels, the overall level of $\overline{\mathcal D}$ decreases as $\beta$ moves from $-0.5\pi$ toward $0$, demonstrating a pronounced dependence of the sufficient bound on the phase-lag parameter $\beta$.

    \Cref{fig_cond_3d} provides a three-dimensional visualization of the critical diameter $\overline D$ as a function of the phase-lag parameter $\beta$ and the adaptation rate $\varepsilon$, for several fixed values of the minimal initial coupling $\kappa_{\min}^0$. The surfaces are symmetric with respect to $\beta$ and attain their maximal values near $\beta=-\pi/2$, while $\overline D$ vanishes as $\beta$ approaches $0$ or $-\pi$. Moreover, the plots show that $\overline D$ is identically zero at $\varepsilon=0$ and becomes strictly positive for any $\varepsilon>0$, demonstrating that coupling adaptivity is essential for generating a nontrivial basin of attraction. As $\varepsilon$ increases, the critical diameter grows, with only marginal changes observed for larger values of $\varepsilon$. Comparing panels (a)--(d), stronger initial inhibition (more negative $\kappa_{\min}^0$) uniformly reduces the achievable values of $\overline D$ across the entire parameter range.
\end{remark}

\begin{figure}[h]
    \centering
    \begin{overpic}
        [width=\textwidth]{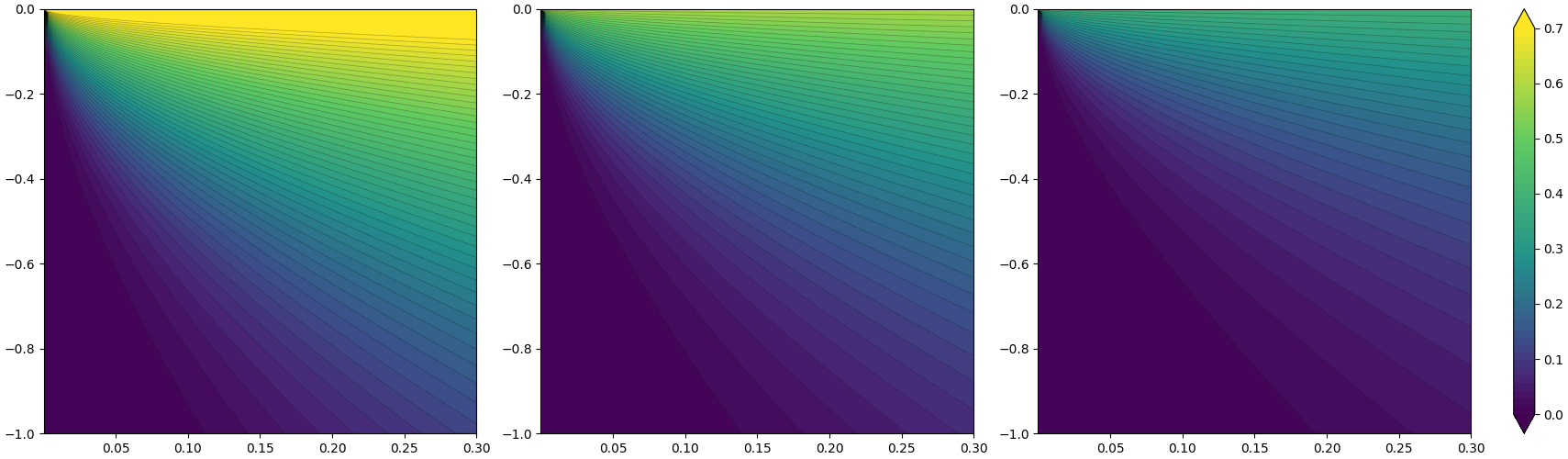}
        \put(12,30){$\beta=-0.5\pi$}
        \put(42,30){$\beta=-0.375\pi$}
        \put(74,30){$\beta=-0.25\pi$}
        \put(16,-1.5){$\varepsilon$}
        \put(-4,15){$\kappa_{\min}^0$}
        \put(101,15){$\overline{\mathcal D}$}
    \end{overpic}
    \caption{Contour plots of the critical diameter $\overline{\mathcal D}$ as a function of the adaptation rate $\varepsilon$ and the minimal initial coupling $\kappa_{\min}^0$, shown for several fixed values of $\beta$.}
    \label{fig_cond_2d}
\end{figure}
\begin{figure}[h]
    \centering
    \begin{subfigure}[b]{0.45\textwidth}
        \centering
        \begin{overpic}[width=1.3\textwidth]{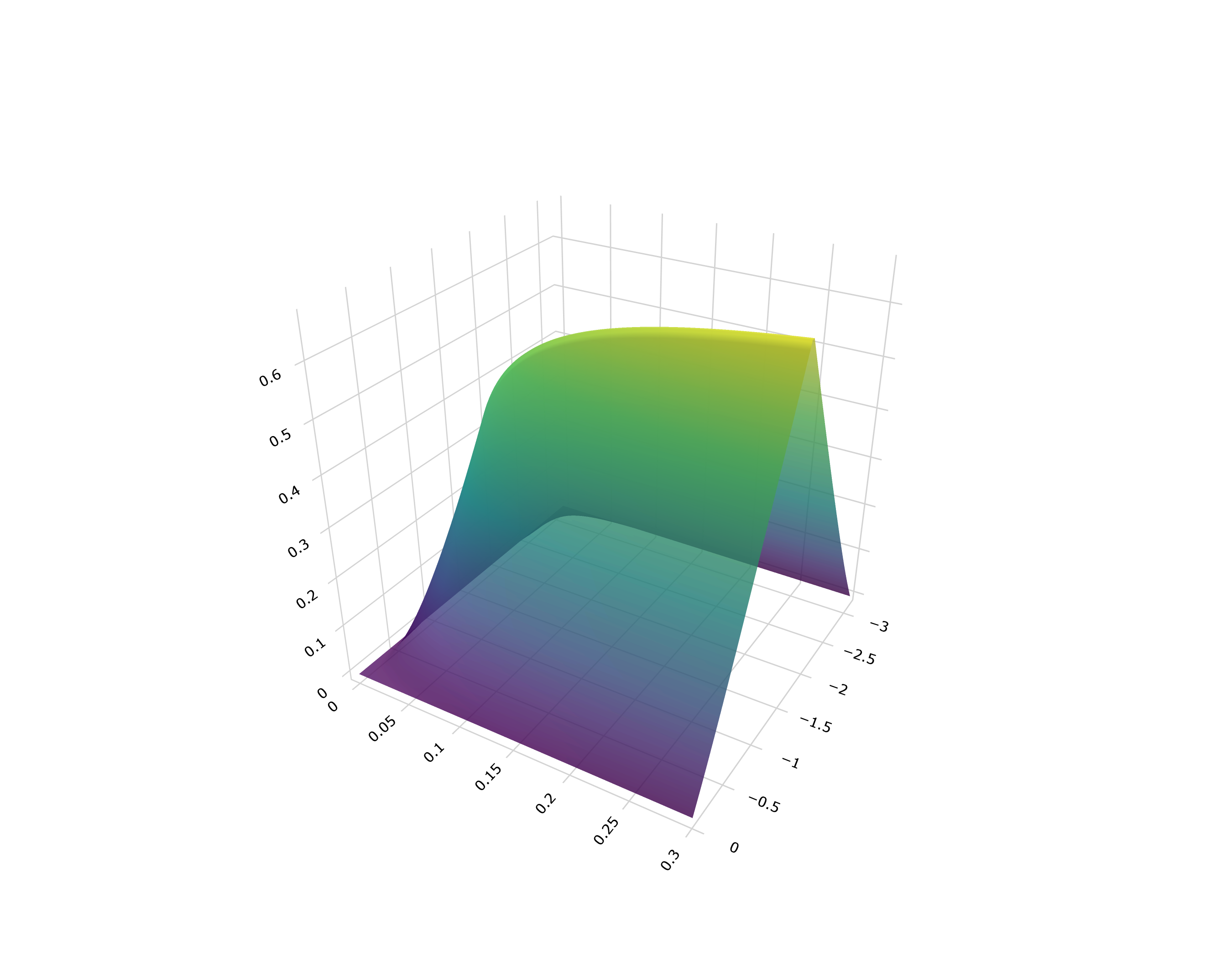}
            \put(15,34){\small$\overline{\mathcal D}$}
            \put(37,8){\small$\varepsilon$}
            \put(69,15){\small$\beta$}
        \end{overpic}

        \vspace{-.5cm}
        
        \caption{$\kappa_{\min}^0=-0.1$}
    \end{subfigure}
    \hspace{-1cm}
    \begin{subfigure}[b]{0.45\textwidth}
        \centering
        \begin{overpic}[width=1.3\textwidth]{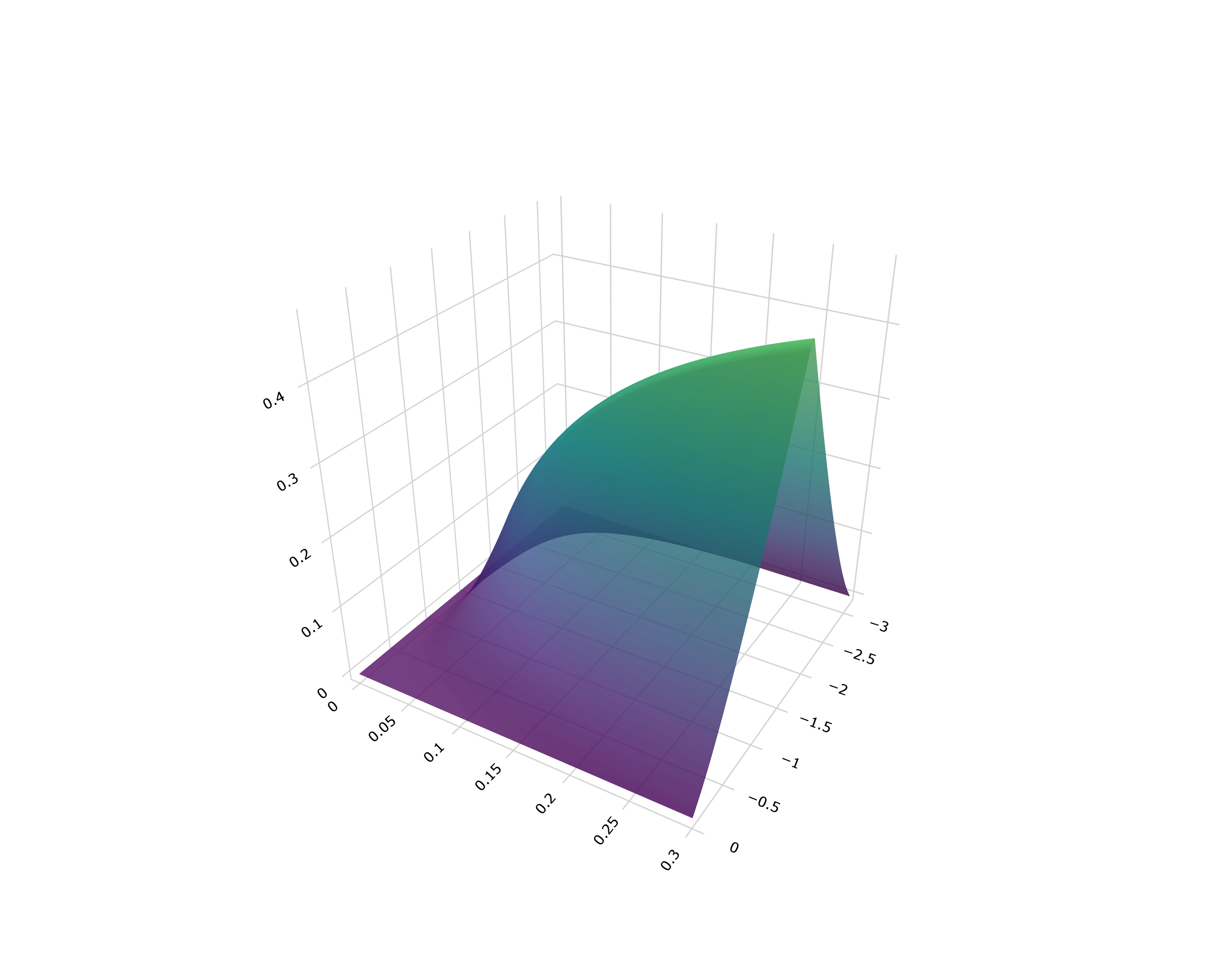}
            
        \end{overpic}

        \vspace{-.5cm}
        
        \caption{$\kappa_{\min}^0=-0.3$}
        % \label{sub_}
    \end{subfigure}

    \vspace{-1cm}
    
    \begin{subfigure}[b]{0.45\textwidth}
        \centering
        \begin{overpic}[width=1.2\textwidth]{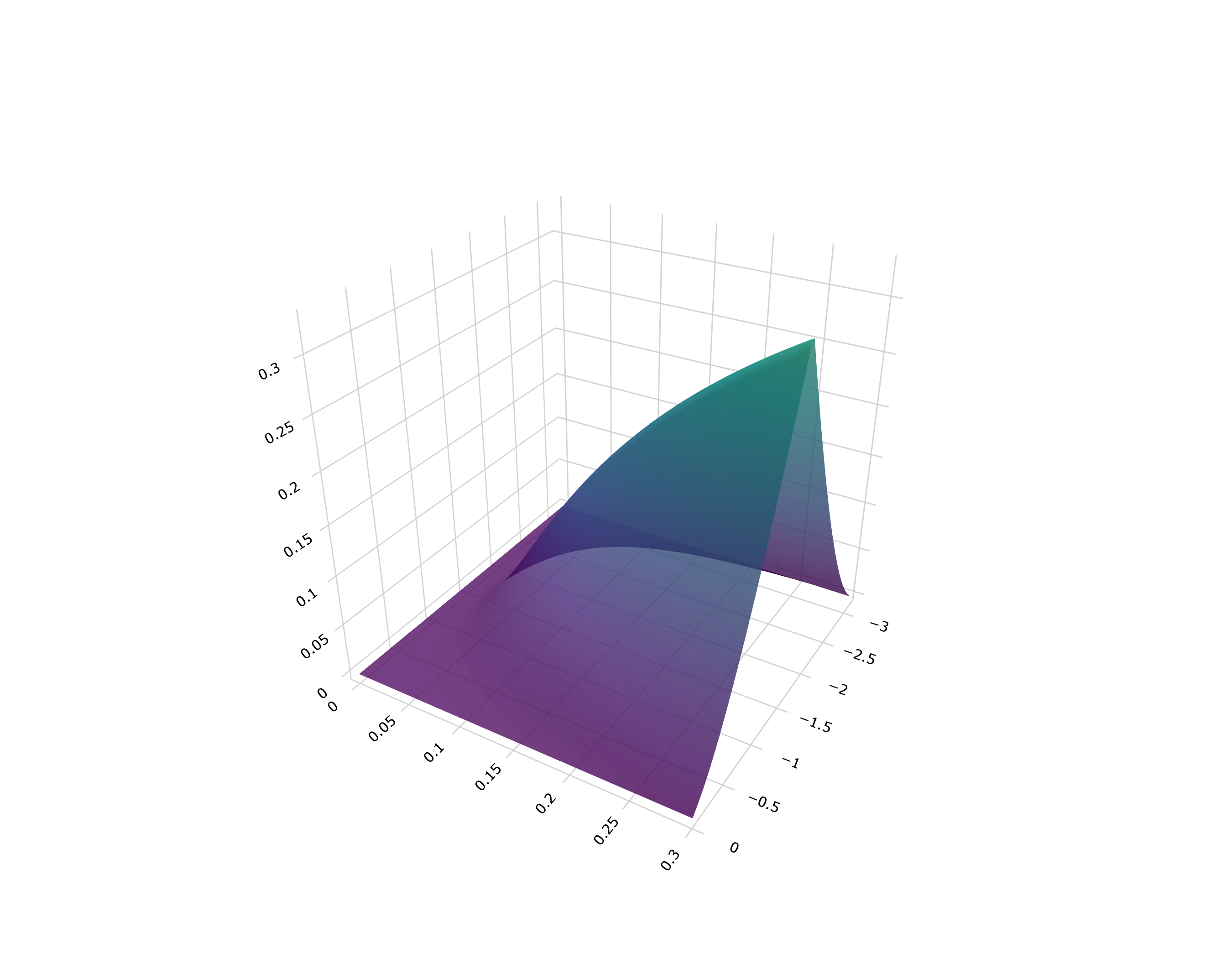}
            
        \end{overpic}

        \vspace{-.5cm}
        
        \caption{$\kappa_{\min}^0=-0.5$}
    \end{subfigure}
    \hspace{-1cm}
    \begin{subfigure}[b]{0.45\textwidth}
        \centering
        \begin{overpic}[width=1.2\textwidth]{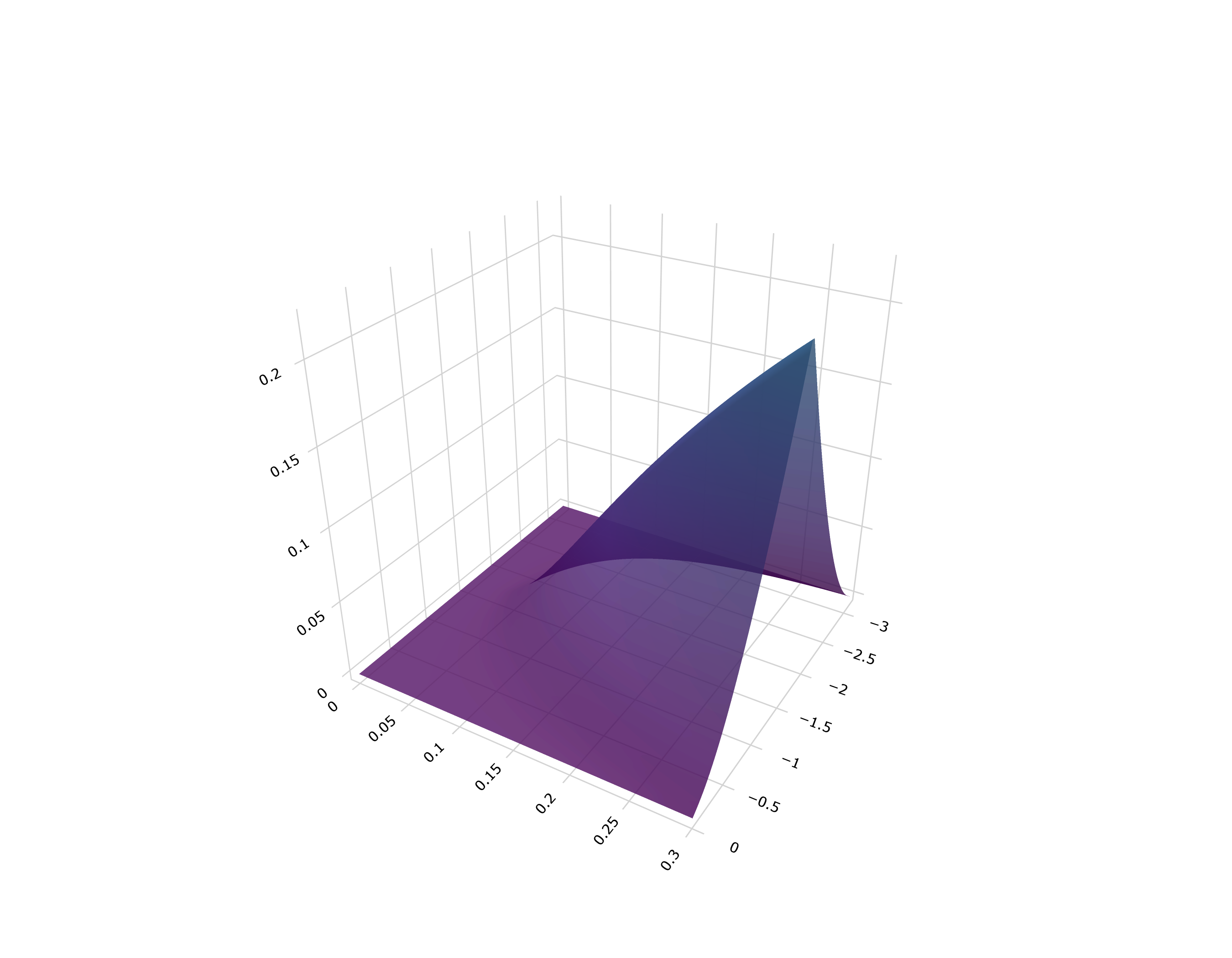}
            
        \end{overpic}

        \vspace{-.5cm}
        
        \caption{$\kappa_{\min}^0=-0.7$}
    \end{subfigure}
    \caption{Three-dimensional plots of the critical diameter $\overline{\mathcal D}$ as a function of the phase-lag parameter $\beta$ and the adaptation rate $\varepsilon$ for several fixed values of the minimal initial coupling $\kappa_{\min}^0$. The surface height is color-coded according to the same scale as in \Cref{fig_cond_2d}.}
    \label{fig_cond_3d}
\end{figure}

\section{Conclusion}\label{sec_conclusion}
In this work, we investigated Kuramoto dynamics on two canonical classes of signed networks, block-structured networks and locally excitatory–globally inhibitory (LEGI) networks, with a particular emphasis on the local stability of phase-locked equilibria. By explicitly characterizing the spectra of the corresponding Jacobian matrices, we identified structural constraints that limit the stability of clustered and antipodal phase-locked states in static signed networks.

These observations naturally lead to the consideration of coupling adaptation. For the adaptive Kuramoto model, we derived explicit sufficient conditions on the initial phase configuration, coupling strengths, and system parameters under which complete synchronization or antipodal synchronization emerges. Our analysis quantitatively clarifies how the adaptation rate $\varepsilon$ enlarges the basin of attraction and enables stable antipodal states that are not supported by the underlying static network alone.

Several directions remain open for future work. While our discussion has been restricted to equilibria on these two classes of networks, the main perspective advocated here is to move beyond specific network instances and toward a classification based on structural properties of asymptotic networks. This viewpoint suggests promising directions for future research on adaptive Kuramoto dynamics, in particular on how different adaptive rules give rise to distinct asymptotic network structures and associated stable phase-locked states.

\appendix
\section{Proof of \Cref{prop_eigval_antipodal}}\label{sec_appendix}
Recall that the interaction matrix $A$ defined in \eqref{A_def} has a block structure determined by the partition of groups into two phase clusters $\mathcal{G}^{(0)}$ and $\mathcal{G}^{(\pi)}$. Let $N_0 = \sum_{m \in \mathcal{G}^{(0)}} |\mathcal{G}_m|$ and $N_\pi = \sum_{m \in \mathcal{G}^{(\pi)}} |\mathcal{G}_m|$ denote the total number of oscillators in each cluster, with $N = N_0 + N_\pi$.

The eigenvalue problem is given by $L \mathbf{v} = \lambda \mathbf{v}$. We decompose the eigenspace into two orthogonal subspaces: the subspace of vectors summing to zero within each group, and the subspace of group-wise constant vectors.

\subsection*{A.1. Local modes (Intra-group dynamics)}
Consider an eigenvector $\mathbf{v}=(v_1,\cdots,v_N)$ that is non-zero only on the nodes belonging to a single group $\mathcal{G}_m$ and satisfies $\sum_{i \in \mathcal{G}_m} v_i = 0$. For any node $i \in \mathcal{G}_m$, the entry $(A\mathbf{v})_i$ involves only interactions within $\mathcal{G}_m$, since $v_j = 0$ for $j \notin \mathcal{G}_m$. Within the group, the coupling strength is $a$. Thus,
\begin{align*}
    (A\mathbf{v})_i = a \sum_{j \in \mathcal{G}_m} v_j = 0.
\end{align*}
Since $A\mathbf{v} = \mathbf{0}$, the eigenvalue equation simplifies to $L \mathbf{v} = D_A \mathbf{v}$. Therefore, $\mathbf{v}$ is an eigenvector with eigenvalue $\lambda = (D_A)_{ii}$.
The row sum $(D_A)_{ii}$ for $i \in \mathcal{G}_m$ is calculated as:
\begin{itemize}
    \item If $m \in \mathcal{G}^{(0)}$:
    \begin{align*}
        (D_A)_{ii} &= a|\mathcal{G}_m| + b(N_0 - |\mathcal{G}_m|) - b N_\pi \\
        &= (a-b)|\mathcal{G}_m| + b(N_0 - N_\pi).
    \end{align*}
    \item If $m \in \mathcal{G}^{(\pi)}$:
    \begin{align*}
        (D_A)_{ii} &= a|\mathcal{G}_m| + b(N_\pi - |\mathcal{G}_m|) - b N_0 \\
        &= (a-b)|\mathcal{G}_m| - b(N_0 - N_\pi).
    \end{align*}
\end{itemize}
These two cases can be combined into the single expression found in \Cref{prop_eigval_antipodal}:
$$
\lambda = (a-b)|\mathcal{G}_m| + b\left( \sum_{k \in \mathcal{G}^{(0)}} |\mathcal{G}_k| - \sum_{k \in \mathcal{G}^{(\pi)}} |\mathcal{G}_k| \right) (2\mathbf{1}_{\mathcal{G}^{(0)}}(m) - 1).
$$
For each group $\mathcal{G}_m$, the constraint $\sum v_i = 0$ imposes one linear restriction, yielding multiplicity $|\mathcal{G}_m| - 1$. Summing over all $m$, these local modes account for $\sum_{m=1}^M (|\mathcal{G}_m| - 1) = N - M$ eigenvalues.

\subsection*{A.2. Global modes (Inter-group dynamics)}
The remaining $M$ eigenvalues correspond to eigenvectors $\mathbf{v}$ that are constant within each group. Let $\mathbf{v}$ be defined by the vector $\mathbf{u} = (u_1, \dots, u_M)^\top$, where $v_i = u_m$ for all $i \in \mathcal{G}_m$.

The dynamics of these group averages allow for specific structured solutions:

\paragraph{1. Synchronization mode ($\lambda = 0$)}
The vector $\mathbf{v} = \mathbf{1}$ (all $u_m = 1$) is clearly in the kernel of the Laplacian $L$, corresponding to the simplified eigenvalue $\lambda = 0$.

\paragraph{2. Antipodal mode ($\lambda = -bN$)}
Consider a vector defined by the phase clusters: $u_m = u^{(0)}$ for $m \in \mathcal{G}^{(0)}$ and $u_m = u^{(\pi)}$ for $m \in \mathcal{G}^{(\pi)}$, satisfying the weighted orthogonality condition $N_0 u^{(0)} + N_\pi u^{(\pi)} = 0$.
The row sums of $A$ act on this vector as follows. For $i \in \mathcal{G}_m \subset \mathcal{G}^{(0)}$:
$$
(A\mathbf{v})_i = a |\mathcal G_m| u^{(0)} + b(N_0 - |\mathcal G_m|) u^{(0)} - b N_\pi u^{(\pi)}.
$$
Substituting $u^{(\pi)} = -(N_0/N_\pi)u^{(0)}$ yields:
$$
(A\mathbf{v})_i = \Big((a-b)|\mathcal G_m| + b N_0 + b N_0\Big)u^{(0)} =\Big((a-b)|\mathcal G_m| + b(N+N_\pi)\Big) u^{(0)}.
$$
Calculating $(L\mathbf{v})_i = (D_A)_{ii} u^{(0)} - (A\mathbf{v})_i$, and simplifying, we obtain $\lambda = -bN$. This is a simple eigenvalue associated with the global phase difference between the two antipodal clusters.

\paragraph{3. Cluster splay modes}
The remaining eigenvalues arise from deviations within the clusters $\mathcal{G}^{(0)}$ and $\mathcal{G}^{(\pi)}$ that preserve the cluster means.
Consider a vector where $u_m$ varies for $m \in \mathcal{G}^{(\pi)}$ such that $\sum_{m \in \mathcal{G}^{(\pi)}} |\mathcal{G}_m| u_m = 0$, and $u_m = 0$ for all $m \in \mathcal{G}^{(0)}$.
For any $k \in \mathcal{G}^{(\pi)}$, the eigenvalue equation reads:
\[\lambda u_k = (D_A)_{kk} u_k - \left( a|\mathcal{G}_k|u_k + \sum_{l \in \mathcal{G}^{(\pi)}, l \neq k} b|\mathcal{G}_l| u_l \right).\]
Since $\sum_{l \in \mathcal{G}^{(\pi)}, l \neq k} |\mathcal{G}_l| u_l = -|\mathcal{G}_k| u_k$ and
\[(D_A)_{kk}=(a-b)|\mathcal G_k|+b(N_\pi-N_0),\]
the interaction term simplifies, leading to:
$$
\lambda = b(N_\pi - N_0) = b\left( 2 \sum_{m \in \mathcal{G}^{(\pi)}} |\mathcal{G}_m| - N \right).
$$
The number of linearly independent vectors of this type is $|\{m : m \in \mathcal{G}^{(\pi)}\}| - 1$.
Similarly, variations within $\mathcal{G}^{(0)}$ yield eigenvalues $\lambda = b(N_0 - N_\pi)$ with multiplicity $|\{m : m \in \mathcal{G}^{(0)}\}| - 1$.

Combining these results yields the spectrum characterized in \Cref{prop_eigval_antipodal}. \qed


\begin{thebibliography}{10}
	
	\bibitem{AS2004}
	Daniel~M. Abrams and Steven~H. Strogatz.
	\newblock Chimera states for coupled oscillators.
	\newblock {\em Phys. Rev. Lett.}, 93:174102, 2004.
	
	\bibitem{Altafini2013}
	Claudio Altafini.
	\newblock Consensus problems on networks with antagonistic interactions.
	\newblock {\em IEEE Transactions on Automatic Control}, 58(4):935--946, 2013.
	
	\bibitem{BGKKY2023}
	Rico Berner, Thilo Gross, Christian Kuehn, Jürgen Kurths, and Serhiy Yanchuk.
	\newblock Adaptive dynamical networks.
	\newblock {\em Physics Reports}, 1031:1--59, 2023.
	
	\bibitem{BSY2019}
	Rico Berner, Eckehard Sch\"{o}ll, and Serhiy Yanchuk.
	\newblock Multiclusters in networks of adaptively coupled phase oscillators.
	\newblock {\em SIAM Journal on Applied Dynamical Systems}, 18(4):2227--2266,
	2019.
	
	\bibitem{Cartwright1956}
	Dorwin Cartwright and Frank Harary.
	\newblock Structural balance: a generalization of heider's theory.
	\newblock {\em Psychological review}, 63(5):277, 1956.
	
	\bibitem{CMM2018}
	Hayato Chiba, Georgi~S. Medvedev, and Matthew~S. Mizuhara.
	\newblock Bifurcations in the kuramoto model on graphs.
	\newblock {\em Chaos: An Interdisciplinary Journal of Nonlinear Science},
	28(7):073109, 2018.
	
	\bibitem{DJD2019}
	Robin Delabays, Philippe Jacquod, and Florian D\"{o}rfler.
	\newblock The kuramoto model on oriented and signed graphs.
	\newblock {\em SIAM Journal on Applied Dynamical Systems}, 18(1):458--480,
	2019.
	
	\bibitem{Gallier2016}
	Jean Gallier.
	\newblock Spectral theory of unsigned and signed graphs. applications to graph
	clustering: a survey.
	\newblock {\em arXiv preprint arXiv:1601.04692}, 2016.
	
	\bibitem{GB2008}
	Thilo Gross and Bernd Blasius.
	\newblock Adaptive coevolutionary networks: a review.
	\newblock {\em Journal of the Royal Society Interface}, 5(20):259--271, 2008.
	
	\bibitem{HNP2016}
	Seung-Yeal Ha, Se~Eun Noh, and Jinyeong Park.
	\newblock Synchronization of kuramoto oscillators with adaptive couplings.
	\newblock {\em SIAM Journal on Applied Dynamical Systems}, 15(1):162--194,
	2016.
	
	\bibitem{Haugland2021}
	Sindre~W Haugland.
	\newblock The changing notion of chimera states, a critical review.
	\newblock {\em Journal of Physics: Complexity}, 2(3):032001, 2021.
	
	\bibitem{HS2011}
	Hyunsuk Hong and Steven~H. Strogatz.
	\newblock Kuramoto model of coupled oscillators with positive and negative
	coupling parameters: An example of conformist and contrarian oscillators.
	\newblock {\em Phys. Rev. Lett.}, 106:054102, 2011.
	
	\bibitem{Jaros2018}
	Patrycja Jaros, Serhiy Brezetsky, Roman Levchenko, Dawid Dudkowski, Tomasz
	Kapitaniak, and Yuri Maistrenko.
	\newblock Solitary states for coupled oscillators with inertia.
	\newblock {\em Chaos: An Interdisciplinary Journal of Nonlinear Science},
	28(1):011103, 2018.
	
	\bibitem{KYSN2017}
	D.~V. Kasatkin, S.~Yanchuk, E.~Sch\"oll, and V.~I. Nekorkin.
	\newblock Self-organized emergence of multilayer structure and chimera states
	in dynamical networks with adaptive couplings.
	\newblock {\em Phys. Rev. E}, 96:062211, 2017.
	
	\bibitem{Menck2013}
	Peter~J Menck, Jobst Heitzig, Norbert Marwan, and J{\"u}rgen Kurths.
	\newblock How basin stability complements the linear-stability paradigm.
	\newblock {\em Nature physics}, 9(2):89--92, 2013.
	
	\bibitem{RZ2007}
	Quansheng Ren and Jianye Zhao.
	\newblock Adaptive coupling and enhanced synchronization in coupled phase
	oscillators.
	\newblock {\em Phys. Rev. E}, 76:016207, 2007.
	
	\bibitem{Sawicki2023}
	Jakub Sawicki, Rico Berner, Sarah~AM Loos, Mehrnaz Anvari, Rolf Bader, Wolfram
	Barfuss, Nicola Botta, Nuria Brede, Igor Franovi{\'c}, Daniel~J Gauthier,
	et~al.
	\newblock Perspectives on adaptive dynamical systems.
	\newblock {\em Chaos: An Interdisciplinary Journal of Nonlinear Science},
	33(7), 2023.
	
	\bibitem{SYT2002}
	Philip Seliger, Stephen~C. Young, and Lev~S. Tsimring.
	\newblock Plasticity and learning in a network of coupled phase oscillators.
	\newblock {\em Phys. Rev. E}, 65:041906, 2002.
	
	\bibitem{SAB2019}
	Guodong Shi, Claudio Altafini, and John~S. Baras.
	\newblock Dynamics over signed networks.
	\newblock {\em SIAM Review}, 61(2):229--257, 2019.
	
	\bibitem{WSG2006}
	Daniel~A. Wiley, Steven~H. Strogatz, and Michelle Girvan.
	\newblock The size of the sync basin.
	\newblock {\em Chaos: An Interdisciplinary Journal of Nonlinear Science},
	16(1):015103, 2006.
	
	\bibitem{ZC2014}
	Hongwei Zhang and Jie Chen.
	\newblock Bipartite consensus of general linear multi-agent systems.
	\newblock {\em 2014 American Control Conference}, pages 808--812, 2014.
	
\end{thebibliography}
\end{document}